\newcommand{\repeattheorem}[1]{%
  \begingroup
  \renewcommand{\thetheorem}{\ref{#1}}%
  \expandafter\expandafter\expandafter\theorem
  \csname reptheorem@#1\endcsname
  \endtheorem
  \endgroup
}
\xdef\csname reptheorem@#1\endcsname{%
    \unexpanded\expandafter{\BODY}%
  }%
\unskip\label{#1}\endtheorem
\newcommand{\sco}{,\ldots,}
\newcommand{\RR}{\mathbb{R}}
\newcommand{\NN}{\mathbb{N}}
\newcommand{\ZZ}{\mathbb{Z}}
\newcommand{\QQ}{\mathbb{Q}}
\newcommand{\CC}{\mathbb{C}}
\newcommand{\PrA}{\mathop{\mathbf{PrA}}\nolimits}
\newcommand{\PA}{\mathop{\mathbf{PA}}\nolimits}
\newcommand{\ZF}{\mathop{\mathbf{ZF}}\nolimits}
\renewcommand{\epsilon}{\varepsilon}
\renewcommand{\phi}{\varphi}
\newcommand{\sref}[2]{\hyperref[#2]{#1 \ref*{#2}}}
\newcommand{\dref}[2]{\hyperref[#2]{ #1 }}
\newcommand{\rk}{\mathsf{rk}}
\newcommand{\Kc}{\mathcal{K}}
\newcommand{\Lc}{\mathcal{L}}
\newcommand{\eqdef}{\stackrel{\mbox{\tiny\rm def}}{=}}\renewcommand{\to}{\rightarrow}
\newcommand{\ra}{\rightarrow}
\newcommand{\Lra}{\Leftrightarrow}
\newcommand{\lra}{\leftrightarrow}
\spnewtheorem{hyp}{Conjecture}[section]{\bfseries}{\itshape}
\spnewtheorem{ex}{Example}{\bfseries}{\itshape}
\newcommand{\xv}{\overline{x}}
\newcommand{\av}{\overline{a}}
\newcommand{\vv}{\overline{v}}
\newcommand{\uv}{\overline{u}}
\newcommand{\cv}{\overline{c}}
\begin{document}
	\author{Fedor Pakhomov$^{12}$ and Alexander Zapryagaev$^3$}
	\title{Multi-Dimensional Interpretations of Presburger Arithmetic in Itself}
	\institute{${}^1$Institute of Mathematics of the Czech Academy of Sciences, Žitná 25, 115 67, Praha 1, Czech Republic\\ ${}^2$Steklov Mathematical Institute of Russian Academy of Sciences, 8, Gubkina Str., Moscow, 119991, Russian Federation\\ ${}^3$National Research University Higher School of Economics, 6, Usacheva Str., Moscow, 119048, Russian Federation}
	
	\maketitle
	
	\begin{abstract}
          Presburger Arithmetic is the true theory of natural numbers with addition. We study interpretations of Presburger Arithmetic in itself. The main result of this paper is that all self-interpretations are definably isomorphic to the trivial one. Here we consider interpretations that might be multi-dimensional.  We note that this resolves a conjecture by A.~Visser. In order to prove the result we show that all linear orderings that are interpretable in $(\NN,+)$ are scattered orderings with the finite Hausdorff rank and that the ranks are bounded in the terms of the dimensions of the respective interpretations.
	\end{abstract}
	
	\section{Introduction}
	
	Presburger Arithmetic $\PrA$ is the true theory of natural numbers with addition. Unlike Peano Arithmetic $\PA$, it is complete, decidable and admits quantifier elimination in an extension of its language\cite{presburger}.
	
	The method of interpretations is a standard tool in model theory and in the study of decidability of first-order theories \cite{tarskimostowski,hodges}. An interpretation of a theory $\mathbf{T}$ in a theory $\mathbf{U}$ is essentially a uniform first-order definition of models of $\mathbf{T}$ in models of $\mathbf{U}$ (see details in Section~2). In the paper we study certain questions about interpretability for Presburger Arithmetic that were well-studied in the case of stronger theories like Peano Arithmetic $\PA$. Although from technical point of view the study of interpretability for Presburger Arithmetic uses completely different methods than the study of interpretability for $\PA$ (see, for example, \cite{visser}), we show that from interpretation-theoretic point of view, $\PrA$ has certain similarities to strong theories that prove all the instances of mathematical induction in their own language, i.e. $\PA$, Zermelo-Fraenkel set theory $\ZF,$ etc.
	
	A \emph{reflexive} arithmetical theory (\cite[p.\,13]{visser}) is a theory that can prove the consistency of all its finitely axiomatizable subtheories. Peano Arithmetic $\PA$ and Zermelo-Fraenkel set theory $\ZF$ are among well-known reflexive theories. In fact, all sequential theories (very general class of theories similar to $\PA,$ see \cite[III.1(b)]{hajekpudlak}) that prove all instances of the induction scheme in their language are reflexive. For sequential theories reflexivity implies that the theory cannot be interpreted in any of its finite subtheories.  A.~Visser has conjectured that this purely interpretational-theoretic property holds for $\PrA$ as well. Note that $\PrA$ satisfies full induction scheme in its own language but cannot formalize the statements about consistency of formal theories.
	
	Unlike sequential theories, Presburger Arithmetic cannot encode tuples of natural numbers by single natural numbers. And thus, for interpretations in Presburger Arithmetic it is important whether individual objects are interpreted by individual objects (one-dimensional interpretations) or by tuples of objects of some fixed length $m$ ($m$-dimensional interpretations).
	
	J.~Zoethout \cite{jetze} considered the case of one-dimensional interpretations and proved that if any one-dimensional interpretation of $\PrA$ in $(\NN,+)$ gives a model definably isomorphic to $(\mathbb{N},+)$, then Visser's conjecture holds for one-dimensional interpretations, i.e. there are no one-dimensional interpretations of $\PrA$ in its finite subtheories. Moreover, he proved that any interpretation of $\PrA$ in $(\NN,+)$ is isomorphic to $(\NN,+)$; however, he hadn't proved that the isomorphism is definable. We improve the latter result and establish the definability of the isomorphism.
	
	\begin{reptheorem}{1a}
		The following holds for any  model $\mathfrak{A}$ of $\PrA$  that is one-di\-men\-sionally interpreted in the model $(\NN,+)$:   
		\begin{enumerate}[(a)] 
		\item $\mathfrak{A}$ is isomorphic to $(\NN,+)$,
		\item the isomorphism is definable in $(\NN,+)$.
		\end{enumerate}
	\end{reptheorem}
	
	Then, by a more sophisticated technique, we establish Visser's conjecture for multi-dimensional interpretations.
	
	\begin{reptheorem}{1b}
		The following holds for any model $\mathfrak{A}$ of $\PrA$  that is interpreted in $(\NN,+)$: 
		\begin{enumerate}[(a)] 
		\item $\mathfrak{A}$ is isomorphic to $(\NN,+)$,
		\item the isomorphism is definable in $(\NN,+)$.
		\end{enumerate}
	\end{reptheorem}
	
	In the present paper we obtain both \sref{Theorem}{1a} (a) and \sref{Theorem}{1b} (a) as a corollary of a single fact about linear orderings interpretable in $(\NN,+)$. Recall that any non-standard model of Presburger arithmetic has the order type of the form $\mathbb{N}+\mathbb{Z}\cdot A$, where $A$ is a dense linear ordering. In particular, it means that the order types of non-standard models of $\PrA$ are never scattered (a linear ordering is called \emph{scattered} if it contains no dense suborderings). We show that any linear ordering that is interpretable in $(\mathbb{N},+)$ is scattered.
	
	In fact, we establish an even sharper result and estimate the ranks of the interpreted orderings. The standard notion of rank of a scattered linear ordering is the  Cantor-Bendixson rank that goes back to Hausdorff \cite{hausdorff}. However, in our case a more precise estimation is obtained using a slightly different notion of $\mathit{VD}_*$-rank from \cite{krs}.
	
	\begin{reptheorem}{ordering}
		Suppose a linear ordering $(L,\prec)$ is $m$-dimensionally interpretable in $(\NN,+)$. Then $(L,\prec)$ is scattered and has $\mathit{VD}_*$-rank at most $m$.
	\end{reptheorem}

	In order to prove \sref{Theorem}{1a} (b), we show that the (unique) isomorphism of the interpreted model $\mathfrak{A}$ and $(\NN,+)$ is in fact definable in $(\NN,+)$. This isomorphism is trivially definable using counting quantifiers, while the theorem that in Presburger Arithmetic first-order formulas with counting quantifiers have the same expressive power as ordinary first-order formulas is due to H.~Apelt \cite{apelt} and N.~Schweikardt \cite{schweikardt}.
	
	The proof of \sref{Theorem}{1b} relies on a theory of cardinality functions $p\mapsto |A_p|$ for definable families of finite sets $\langle A_p\subseteq \NN^m\mid p\in P\subseteq \NN^n\rangle$. 
	
	We note that the present work essentially is an expanded version of the paper \cite{zp}. Results of \sref{Theorem}{1a}(a,b), \sref{Theorem}{1b}(a), \sref{Theorem}{ordering}, \sref{Theorem}{bash}, and \sref{Corollary}{card_funct}  were already present in \cite{zp}. \sref{Theorem}{1b}(b) is new.
	
	The work is organized in the following way. Section 2 introduces Presburger Arithmetic and interpretations. In Section 3, we define notion of dimension for Presburger-definable sets and prove \sref{Theorem}{ordering}. In Section 4 we prove \sref{Theorem}{1a}. In Section 5 we prove \sref{Theorem}{1b}.
	
	\section{Preliminaries}
	
	\subsection{Presburger Arithmetic}
	
	In this section we give some general results about Presburger Arithmetic and definable sets in $(\mathbb{N},+)$.  In this paper the set of all natural numbers $\NN$ includes zero.
		
	\begin{definition}
		{\em Presburger Arithmetic} $(\PrA)$ is the elementary theory of the model $(\NN,+)$ of natural numbers with addition.
	\end{definition}
	
	It is easy to define the constants $0,1$, relation $\le$ and modulo comparison relations $\equiv_n$, for all $n\ge 1$, in the model $(\NN,+)$. In the language extended by these constants and predicates, Presburger arithmetic admits quantifier elimination \cite{presburger}. Furthermore, $\PrA$ is decidable.
	
	
	$\PrA$ has non-standard models. Unlike $\PA$, however, where it is impossible to produce an explicit non-standard model by defining some recursive addition and multiplication (Tennenbaum's Theorem \cite{tennenbaum}), examples of non-standard models of $\PrA$ can be given explicitly (see \cite{smorynski}). By a usual argument one can show that any non-standard model of $\PrA$ has the order type $\NN+\ZZ\cdot L$, where $L$ is a dense linear ordering without endpoints. In particular, any countable model of $\PrA$ has the order type of either $\NN$ or $\NN+\ZZ\cdot\QQ.$
	
		
	
	\begin{definition}
		For vectors $\overline{c},\overline{p_1},\ldots,\overline{p_n}\in\ZZ^m$ we call the set $\{\overline{c}+\sum k_i\overline{p_i}\mid k_i\in\NN\}\subseteq \ZZ^m$ a \emph{lattice} (or a \emph{linear set}) generated by $\{\overline{p_i}\}$ from $\overline{c}$. If $\{\overline{p_i}\}$ are linearly independent, we call this set a \emph{fundamental lattice}.
	\end{definition}
	
	According to \cite{ginsburg}, definable subsets of $\NN^m$ are exactly the unions of a finite number of (possibly intersecting, possibly non-fundamental) lattices (such unions are also called \emph{semilinear sets} in literature). Ito has shown in \cite{ir} that any set in $\NN^m$ which is a union of a finite number of (possibly intersecting, possibly non-fundamental) lattices (a semilinear set) can be expressed as a union of a finite number of disjoint fundamental lattices. Hence,
	
	\begin{theorem} \label{fund}
		All subsets of $\NN^k$ definable in $(\mathbb{N},+)$ are exactly the subsets of $\NN^k$ that are disjoint unions of finitely many fundamental lattices.
	\end{theorem}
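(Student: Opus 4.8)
The plan is to prove the two inclusions separately, relying on the two cited results for the substantive direction and on an elementary definability argument for the other. So I would split the statement into: (i) every disjoint union of finitely many fundamental lattices is definable in $(\NN,+)$, and (ii) every subset of $\NN^k$ definable in $(\NN,+)$ is such a disjoint union.

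For (i), since the class of definable sets is closed under finite unions, it suffices to show that a single fundamental lattice $\{\overline{c}+\sum_i k_i\overline{p_i}\mid k_i\in\NN\}$ is definable. A point $\overline{x}\in\NN^k$ lies in this lattice exactly when there exist $k_1,\ldots,k_n\in\NN$ with $\overline{x}=\overline{c}+\sum_i k_i\overline{p_i}$. Writing this equality coordinate-wise gives a conjunction of linear equations with integer coefficients; after moving the terms carrying negative coefficients to the opposite side of each equation, one obtains a formula in the language of $(\NN,+)$ (multiplication by a fixed integer being iterated addition), with the $k_i$ existentially quantified. Hence each fundamental lattice, and therefore every finite union—in particular every disjoint union—of fundamental lattices, is definable. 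Note that linear independence of the generators plays no role here.

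For (ii), I would simply compose the two quoted results. By the Ginsburg–Spanier theorem \cite{ginsburg}, every subset of $\NN^k$ definable in $(\NN,+)$ is semilinear, that is, a finite union of possibly intersecting and possibly non-fundamental lattices. By Ito's theorem \cite{ir}, any such semilinear set can be rewritten as a disjoint union of finitely many fundamental lattices. Chaining the two yields that every definable subset of $\NN^k$ is a disjoint union of finitely many fundamental lattices, which together with (i) establishes the claimed equivalence.

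The genuine mathematical content sits entirely in direction (ii), and within it in Ito's refinement: passing from an arbitrary finite union of overlapping, non-fundamental lattices to a \emph{disjoint} union of \emph{fundamental} ones is the step that requires real work, since one must simultaneously resolve the overlaps and eliminate the linear dependences among generators. As both the Ginsburg–Spanier characterization and Ito's decomposition are available as cited results, the proof here reduces to assembling them with the routine argument of (i); the only point worth a moment's care is that the fundamental lattices produced by Ito's construction actually lie inside $\NN^k$, which is automatic because they partition the given subset of $\NN^k$.
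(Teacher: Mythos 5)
Your proof is correct and follows essentially the same route as the paper, which likewise obtains the result by chaining the Ginsburg--Spanier characterization of definable sets as semilinear with Ito's theorem converting semilinear sets into disjoint unions of fundamental lattices. Your explicit verification of the easy direction (definability of each lattice) is a harmless addition that the paper leaves implicit in the ``exactly'' of the Ginsburg--Spanier citation.
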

	
	\begin{definition}
	 For a fundamental lattice $J$ generated by $\overline{v}_1,\ldots,\overline{v}_n$ from $\overline{c}$ we call a function $f\colon J\to \NN$ \emph{linear} if it is of the form $f(\overline{c}+x_1\overline{v}_1+\ldots+x_n\overline{v}_n)=a_0+a_1x_1+\ldots+a_nx_n$ for some $a_0,\ldots,a_n\in\NN$.
	 
	 For an $(\NN,+)$-definable set $A$ we call a function $f\colon A\to \NN$ \emph{piecewise linear} if there is a decomposition of $A$ into disjoint fundamental lattices $J_1,\ldots,J_n$ such that the restriction of $f$ on any $J_i$ is linear\footnote{In our work, we use the word `piecewise' only in the sense of the current definition.}. 
	\end{definition}

	\begin{theorem}\label{jetz}
		Functions $f\colon\NN^n\ra\NN$ definable in $(\NN,+)$ are exactly piecewise linear functions.
	\end{theorem}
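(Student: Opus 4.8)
The plan is to prove the two inclusions separately; the forward direction (every piecewise linear function is definable) is routine, while the converse (every definable function is piecewise linear) carries the real content.

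For the easy direction, suppose $f$ is piecewise linear, witnessed by a decomposition $\NN^n=J_1\sqcup\dots\sqcup J_k$ into fundamental lattices on each of which $f$ is linear. On a piece $J_i$ generated by $\overline{v}_1,\dots,\overline{v}_r$ from $\overline{c}$, with $f(\overline{c}+\sum_j x_j\overline{v}_j)=a_0+\sum_j a_jx_j$, the graph of $f$ over $J_i$ is exactly the lattice in $\NN^{n+1}$ generated by $(\overline{v}_1,a_1),\dots,(\overline{v}_r,a_r)$ from $(\overline{c},a_0)$. Each such lattice is definable, so the graph of $f$, being a finite union of them, is definable; hence so is $f$.

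For the converse, let $f\colon\NN^n\to\NN$ be definable and let $G\subseteq\NN^{n+1}$ be its graph. By \sref{Theorem}{fund}, $G$ is a disjoint union of fundamental lattices $L_1,\dots,L_k$. Let $\pi\colon\NN^{n+1}\to\NN^n$ forget the last coordinate. Since $f$ is a total single-valued function, $\pi$ restricts to a bijection $G\to\NN^n$; in particular it is injective on each $L_j$, and the images $\pi(L_j)$ are pairwise disjoint and cover $\NN^n$. Writing a generating system of $L_j$ as $(\overline{v}_i,b_i)$ from $(\overline{c},a_0)$, the image $\pi(L_j)$ is generated by $\overline{v}_1,\dots,\overline{v}_r$ from $\overline{c}$. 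First I would check that injectivity of $\pi$ on $L_j$ forces $\overline{v}_1,\dots,\overline{v}_r$ to be linearly independent: a nontrivial integer relation $\sum_i m_i\overline{v}_i=0$ produces, by splitting each $m_i$ into its positive and negative parts, two distinct natural-coefficient tuples mapping to the same point, a contradiction. Hence each $\pi(L_j)$ is again a fundamental lattice.

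It then remains to read off $f$ on each piece: for $\overline{p}=\overline{c}+\sum_i x_i\overline{v}_i\in\pi(L_j)$ the unique preimage in $L_j$ has last coordinate $a_0+\sum_i b_ix_i$, so $f(\overline{p})=a_0+\sum_i b_ix_i$ in the (unique) lattice coordinates of $\overline{p}$. Finally one confirms the coefficients are natural numbers: putting $x_i=0$ gives $a_0\ge 0$, and if some $b_i$ were negative then $f$ would attain negative values as $x_i\to\infty$, contradicting $f\colon\NN^n\to\NN$. Thus $f$ is linear on each fundamental lattice $\pi(L_j)$, and since these form a decomposition of $\NN^n$, $f$ is piecewise linear. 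The one delicate point---the main obstacle---is the interplay between injectivity of $\pi$ (which yields fundamentality of the projected lattices) and the nonnegativity of $f$ (which yields naturality of the coefficients): both are exactly what is needed to land in the definition of piecewise linearity rather than in a weaker affine-over-$\ZZ$ statement.
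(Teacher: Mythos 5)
Your proof is correct and follows the same route as the paper: decompose the graph into disjoint fundamental lattices via Theorem \ref{fund}, project along the last coordinate, and read off linearity on each projected piece. The only difference is that you supply the justifications the paper dismisses with ``clearly'' --- namely that single-valuedness of $f$ makes the projection injective on each lattice (hence the projected generators stay independent and the images stay disjoint) and that nonnegativity of $f$ forces the coefficients into $\NN$ --- which are exactly the right points to check.
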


	\begin{proof}
		The definability of all piecewise linear functions in Presburger Arithmetic is obvious. A function $f\colon \NN^n\to \NN$ is definable if and only if its graph
		
		\begin{center}
			$G=\{(a_1,\ldots,a_n,f(a_1,\ldots,a_n))\mid (a_1,\ldots,a_n)\in \NN^n\}$
		\end{center}
		\noindent
		is definable. According to \sref{Theorem}{fund}, $G$ is a finite union of fundamental lattices $J_1\sqcup\ldots \sqcup J_k$. For $1\le i\le k$ we denote by $J_i'$ the projections of $J_i$ along the last coordinate, $J_i'= \{(a_1,\ldots,a_n)\mid\exists a_{n+1} ( (a_0,a_1,\ldots,a_n,a_{n+1})\in J_i)\}$. Clearly, all $J_i'$ are fundamental lattices. Furthermore, the restriction of the function $f$ on each of $J_i'$ is linear.
	\end{proof}
	
    \subsection{Interpretations}
    
	We define the notion of a multi-dimensional first-order non-parametric interpretation, following \cite{tarskimostowski}.
	
	\begin{definition}
		An \emph{$m$-dimensional interpretation} $\iota$ of some first-order language $\Kc$  in a model $\mathfrak{A}$ consists of first-order formulas of language of $\mathfrak{A}$:
		
		\begin{enumerate}
			\item $D_{\iota}(\overline{y})$ defining the set $\mathbf{D}_{\iota}\subseteq \mathfrak{A}^m$ (domain of interpreted model);			
			\item $P_{\iota}(\overline{x}_1,\ldots,\overline{x}_n)$, for predicate symbols $P(x_1,\ldots,x_n)$ of $\Kc$ including equality;			
			\item $f_\iota(\overline{x}_1,\ldots,\overline{x}_n,\overline{y})$, for functional symbols $f(x_1,\ldots,x_n)$ of $\Kc$.
		\end{enumerate}
	\end{definition}
	
	Here all vectors of variables $\overline{x}$ are of length $m$, and $f_\iota$'s should define graphs of some functions (modulo interpretation of equality).
	
	Naturally, $\iota$ and $\mathfrak{A}$ give a model $\mathfrak{B}$ of the language $\Kc$ on the domain $\mathbf{D}_{\iota}/{\sim_{\iota}}$, where equivalence relation $\sim_{\iota}$ is given by $=_{\iota}(\overline{x}_1,\overline{x}_2)$. We will call $\mathfrak{B}$ the \emph{internal model}.	
	
	If $\mathfrak{B}\models \mathbf{T}$, then $\iota$ is an \emph{interpretation of the theory} $\mathbf{T}$ in $\mathfrak{A}$. If for a first-order theory $\mathbf{U}$ an interpretation $\iota$ is an interpretation of $\mathbf{T}$, for any $\mathfrak{A}\models \mathbf{U}$, then $\iota$ is an interpretation of $\mathbf{T}$ in $\mathbf{U}$.
	
	Interpretations are a very natural concept, appearing in mathematics when, for example, Euclidean geometry is interpreted in the theory of real numbers $\RR$ (two-dimensionally, by defining points as pairs of real numbers) in analytic geometry, or the field $\CC$ of complex numbers is two-dimensionally interpreted in $\RR$ by defining $a+bi\lra(a,b)$ and addition and multiplication are declared by definition. We note that in $(\NN,+)$ itself, the field $(\ZZ,+)$ can be interpreted. This is achieved by mapping the negative numbers to odd, positive to even and $0$ to $0$ and defining the addition case-by-case (through non-negative subtraction, which is definable).
	
	We will be interested in interpretations of theories in the standard model of Presburger Arithmetic, that is, in $(\mathbb{N},+)$.
	
	\begin{definition}
		An $m$-dimensional interpretation $\iota$ in a model $\mathfrak{A}$ \emph{has absolute equality} if the symbol $=\in\Kc$ is interpreted as the coincidence of two $m$-tuples.
	\end{definition}
	
	\begin{definition}
		An interpretation $\iota,\kappa$ in a model $\mathfrak{A}$ are \emph{definably isomorphic}, if there is a first-order formula $F(\overline{x},\overline{y})$. of the language of $\mathfrak{A}$ defining an isomorphism between the respective internal models. 
	\end{definition}

	The following theorem is a version of \cite[Lemma 3.2.2]{jetze}, extended to multi-dimensional interpretations. It shows that it suffices to consider only the interpretations with absolute equality.
	
	\begin{theorem}\label{4r}
	Suppose $\iota$ is an  interpretation of  some theory $\mathbf{U}$ in $(\NN,+)$. Then there is an interpretation $\kappa$ of $\mathbf{U}$ in $(\NN,+)$ with absolute equality which is definably isomorphic to $\iota$.
	\end{theorem}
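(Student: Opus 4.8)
The plan is to replace each $\sim_\iota$-equivalence class by a canonical representative chosen in a definable way, so that absolute equality on the representatives coincides with $=_\iota$ on the classes. The key observation is that the lexicographic order $\le_{\mathrm{lex}}$ on $\NN^m$ is a well-order (of order type $\omega^m$), and that, for fixed $m$, the relation $\overline{a}\le_{\mathrm{lex}}\overline{b}$ is definable in $(\NN,+)$, being a finite disjunction built from $\le$ and $=$ on the coordinates. Since every nonempty subset of $\NN^m$ has a $\le_{\mathrm{lex}}$-least element, each $\sim_\iota$-class contains a unique such element.

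First I would define the set of representatives
\[\mathbf{D}_\kappa=\{\overline{y}\in\mathbf{D}_\iota \mid \forall \overline{z}\,({=_\iota}(\overline{z},\overline{y})\ra \overline{y}\le_{\mathrm{lex}}\overline{z})\},\]
which is definable by a formula $D_\kappa(\overline{y})$ and meets every $\sim_\iota$-class in exactly one point. The interpretation $\kappa$ then has domain $D_\kappa$, interprets $=$ as coincidence of $m$-tuples, and interprets each predicate $P$ by the restriction of $P_\iota$ to $\mathbf{D}_\kappa$. For a function symbol $f$ I would set $f_\kappa(\overline{x}_1,\ldots,\overline{x}_n,\overline{y})$ to hold iff $\overline{y}\in\mathbf{D}_\kappa$ and there is $\overline{z}$ with $f_\iota(\overline{x}_1,\ldots,\overline{x}_n,\overline{z})$ and ${=_\iota}(\overline{z},\overline{y})$; that is, the output of $f_\iota$ is replaced by the canonical representative of its class. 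Because $=_\iota$ is a congruence for all the $P_\iota$ and $f_\iota$, these restricted formulas again define predicates and graphs of functions, now with respect to genuine equality.

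Next I would exhibit the definable isomorphism. Let $F(\overline{x},\overline{y})$ be the formula $D_\iota(\overline{x})\wedge D_\kappa(\overline{y})\wedge{=_\iota}(\overline{x},\overline{y})$. It assigns to each element of $\mathbf{D}_\iota$ the unique representative of its class, hence descends to a bijection between the internal model of $\iota$, on $\mathbf{D}_\iota/{\sim_\iota}$, and the internal model of $\kappa$, on $\mathbf{D}_\kappa$. That this bijection preserves the interpreted predicates and functions is immediate from the definitions, since $P_\kappa$ and $f_\kappa$ are literally $P_\iota$ and the $=_\iota$-canonicalized $f_\iota$ restricted to representatives. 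Consequently the internal model of $\kappa$ is isomorphic to that of $\iota$; in particular it is a model of $\mathbf{U}$, so $\kappa$ is an interpretation of $\mathbf{U}$ in $(\NN,+)$ with absolute equality, definably isomorphic to $\iota$ via $F$.

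The only genuinely load-bearing step is the definability of a system of representatives, and this is exactly where the specific structure of $(\NN,+)$ enters: unlike in an arbitrary theory $\mathbf{U}$ — where a definable choice of representatives need not exist and one must work with $\sim_\iota$ directly — here the definable well-order $\le_{\mathrm{lex}}$ supplies canonical representatives outright. The remaining verifications (that $\mathbf{D}_\kappa$ meets each class exactly once, that the restricted formulas still define total functions on the representatives, and that $F$ is an isomorphism) are routine congruence-chasing, which I would only sketch.
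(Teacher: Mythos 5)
Your proposal is correct and follows essentially the same route as the paper: both use the definable lexicographic well-ordering of $\NN^m$ to select the $\prec$-least element of each $\sim_\iota$-class as the domain of $\kappa$, interpret equality as coincidence of tuples, and observe that ${=_\iota}$ itself defines the isomorphism. You spell out the canonicalization of function outputs and the congruence checks a bit more explicitly than the paper, which simply declares these verifications easy.
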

	\begin{proof}		Indeed, there is a definable in $(\NN,+)$ well-ordering $\prec$ of $\NN^m$:
		$$(a_0,\ldots,a_{m-1})\prec (b_0,\ldots,b_{m-1})\stackrel{\mbox{\footnotesize \textrm{def}}}{\iff} \exists  i< m (\forall j< i\;(a_j=b_j)\land a_i<b_i).$$
		Now we could define $\kappa$ by taking the definition of $+$ from $\iota$, interpreting the equality trivially, and declaring the domain of $\kappa$ to be the part of the domain of $\iota$ that contains exactly the $\prec$-least elements of equivalence classes with respect to $\iota$-interpretation of equality. It is easy to see that this $\kappa$ is definably isomorphic to $\iota$.
	\end{proof}
	
	\section{Ranks of Interpreted Orderings}
	
	\subsection{Presburger Dimension}
    Henceforth, we will talk only about definability in the model $(\NN,+)$. By a definable set we always mean a set $A\subseteq\NN^n$ definable in $(\NN,+)$, and a definable function $f\colon A\to B$ will always be understood as a function between definable sets $A$ and $B$ that is definable in $(\NN,+)$ itself.
	
	\begin{definition}
		We say that a natural number $k\ge 1$ is the \emph{dimension} $\dim(A)$ of an \emph{infinite} definable set $A\subseteq\NN^m$ if there is a definable bijection between $A$ and $\NN^k.$
	\end{definition}
	
	The following theorem shows that the definition above uniquely defines the dimension for each infinite definable set.
	
	\begin{theorem}\label{Existe}
		Suppose $A\subseteq \NN^n$ is an infinite definable set. Then there is a unique $m\in\NN$ such that there is a Presburger-definable bijection between $A$ and $\NN^m,$ $1\le m\le k.$
	\end{theorem}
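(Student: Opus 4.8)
The statement splits into existence and uniqueness, and I would treat them separately.

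\emph{Existence.} By \sref{Theorem}{fund} I may write $A=J_1\sqcup\dots\sqcup J_r$ as a disjoint union of fundamental lattices, where $J_i$ is generated by $n_i$ linearly independent vectors $\overline{v}^i_1\sco\overline{v}^i_{n_i}\in\ZZ^n$ from a base point $\overline{c}_i$. The affine map $(k_1\sco k_{n_i})\mapsto\overline{c}_i+\sum_j k_j\overline{v}^i_j$ is a definable bijection $\NN^{n_i}\to J_i$ (injectivity is linear independence, and its graph is plainly definable), so each $J_i$ is definably isomorphic to $\NN^{n_i}$, where $\NN^0$ denotes a one-point set. Since $A$ is infinite and $r$ finite, some $n_i\ge 1$; put $d=\max_i n_i$, so $1\le d\le n$ (at most $n$ vectors of $\ZZ^n$ are independent). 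It then remains to glue the pieces into a single copy of $\NN^d$, which follows by induction on $r$ from the following claim, whose repeated application absorbs each $J_i$ into the top-dimensional part.

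\emph{Lemma.} For every $d\ge 1$ and every $0\le d'\le d$ there is a definable bijection $\NN^d\sqcup\NN^{d'}\simeq\NN^d$. I would prove this by induction on $d$ using two elementary definable moves. \emph{Splitting}: $\NN^d\simeq\{x_1\text{ even}\}\sqcup\{x_1\text{ odd}\}\simeq\NN^d\sqcup\NN^d$ (here $x_1/2$ is definable, as $y=x_1/2\iff y+y=x_1$). \emph{Peeling}: $\NN^d\simeq\{x_1=0\}\sqcup\{x_1\ge 1\}\simeq\NN^{d-1}\sqcup\NN^d$ (shift the first coordinate down by one). For $d'=d$ splitting gives the claim at once; for $d'\le d-1$ peeling and the inductive hypothesis $\NN^{d-1}\simeq\NN^{d-1}\sqcup\NN^{d'}$ yield $\NN^d\simeq\NN^d\sqcup\NN^{d-1}\simeq\NN^d\sqcup(\NN^{d-1}\sqcup\NN^{d'})\simeq(\NN^d\sqcup\NN^{d-1})\sqcup\NN^{d'}\simeq\NN^d\sqcup\NN^{d'}$; the base case $d=1$ absorbs a point via $n\mapsto n+1$ and $\mathrm{pt}\mapsto 0$. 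All these bijections are manifestly definable, and composing them over the decomposition $J_1\sqcup\dots\sqcup J_r$ produces a definable bijection $A\simeq\NN^d$, giving existence with $m=d$.

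\emph{Uniqueness.} It suffices to show that there is no definable bijection $f\colon\NN^m\to\NN^{m'}$ with $m\ne m'$. This is the main obstacle, and I would resolve it by a growth estimate extracted from \sref{Theorem}{jetz}. Each coordinate of $f$ is a definable function $\NN^m\to\NN$, hence piecewise linear; refining to a common decomposition, $f$ is affine on each of finitely many fundamental lattices. Since the lattice coordinates of a point are bounded by a fixed multiple of its ambient norm (again by linear independence of the generators), there is a constant $C$ with $\|f(x)\|_\infty\le C\|x\|_\infty+C$, and likewise $\|f^{-1}(y)\|_\infty\le C\|y\|_\infty+C$. Now I count: the cube $\{x\in\NN^m:\|x\|_\infty\le N\}$ has exactly $(N+1)^m$ elements, and $f$ maps it injectively into $\{y\in\NN^{m'}:\|y\|_\infty\le CN+C\}$, which has $(CN+C+1)^{m'}$ elements, so $(N+1)^m\le(CN+C+1)^{m'}$ for all $N$, forcing $m\le m'$ as $N\to\infty$. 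The symmetric estimate applied to $f^{-1}$ gives $m'\le m$, whence $m=m'$. This makes $\dim(A)$ well defined and completes the proof.
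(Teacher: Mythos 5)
Your proof is correct and follows essentially the same route as the paper: decompose $A$ into fundamental lattices, absorb the pieces into a copy of $\NN^d$ for the maximal dimension $d$, and rule out a definable bijection $\NN^m\to\NN^{m'}$ ($m\ne m'$) by combining the linear growth bound coming from piecewise linearity of definable functions with a count of lattice points in expanding cubes. The only cosmetic difference is that you extract the linear bound directly from the coordinates of $f$ rather than from the paper's auxiliary definable function $g(x)=\min\{y\mid f(I_x^{m})\subseteq I_y^{m'}\}$, and you spell out the gluing lemma that the paper leaves as an induction sketch.
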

	\begin{proof}
		First we show that there is some $m$ possessing the property. According to \sref{Theorem}{fund}, all sets definable in $(\NN,+)$ are disjoint unions of fundamental lattices $J_1,\ldots,J_n$ of the dimensions $k_1,\ldots,k_n$, respectively (the dimension of a fundamental lattice is the number of generating vectors). It is easy to see that for each $J_i$ there is a linear bijection with $\NN^{k_i}$, which is obviously definable. Let us put $m$ to be the maximum of $k_i$'s. 
		
		Now we notice that for each sequence $r_1,\ldots,r_m\in\NN$ and $u=\max(r_1,\ldots,r_m)$, $u\ge 1$, we are able to split $\NN^u$ into a disjoint union of definable sets $B_1,\ldots,B_m$, for which we have definable bijections with $\NN^{r_1},\ldots,\NN^{r_m}$, respectively. This is proved by induction on $m$.
		
		Let us show that there is no other $m$ with this property. Assume the contrary. Then, for some $m_1>m_2$, there is a definable bijection $f\colon\NN^{m_1}\ra\NN^{m_2}$. Let us consider a sequence of expanding cubes $$I_s^{m_1}\eqdef\{(x_1\sco x_k)\mid 0\le x_1\sco x_n\le s\}.$$ We define function $g\colon \NN\to\NN$ to be the function which maps a natural number $x$ to the least $y$ such that $f(I_x^{m_1})\subseteq I_y^{l_2}$. Clearly, $g$ is a definable function. Then there should be some linear function $h\colon \NN\to\NN$ such that $g(x)\le h(x)$, for all $x\in\NN$. But since for each $x\in\NN$ and $y<x^{m_1/m_2}$ the cube $I_x^{m_1}$ contains more points than the cube  $I_y^{m_2},$ from the definition of $g$ we see that $g(x)\ge x^{m_1/m_2}$. This contradicts the linearity of the function $h$.
	\end{proof}
	
	As far as we know, this definition of dimension for Presburger definable sets was first introduced in \cite{cluckers} and restated in \cite{zp}. It can be seen that the dimension of a set $A\subseteq\NN^n$ is equal to the maximal $m$ such that there exists an $m$-dimensional fundamental lattice which is a subset of $A$.
	
	\begin{definition}
	    For a set $A\subseteq \NN^{n+m}$ and $a\in\NN^n$ we define the section $$A\upharpoonright a=\{b\in \NN^m\mid a\frown b\in A\},$$ where $a\frown b$ is the concatenation of the tuples $a$ and $b$.
	\end{definition}
	
	\begin{definition}
	    For a definable set $P\subseteq \NN^n$ a family of sets $\langle A_p\subseteq \NN^m \mid p\in P\rangle$ is called \emph{definable} if  there is a definable set $A\subseteq P\times \NN^m$ such that $A_p=A\upharpoonright p$, for any $p\in P$.
	\end{definition}

	
	\begin{lemma}\label{MS}
		Suppose $\langle A_p\subseteq \NN^n\mid p\in P\rangle$ is a definable family of sets, and the set $P'\subset P$ (possibly undefinable) is such that for $p\in P'$ the sets $A_p$ are $n$-dimensional and pairwise disjoint. Then $P'$ is finite.
	\end{lemma}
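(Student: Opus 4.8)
The plan is to assume for contradiction that $P'$ is infinite and to show that each $A_p$ with $p\in P'$ occupies, in the limit, a fixed positive fraction of $\NN^n$; pairwise disjointness then caps the number of such $p$. First I would reduce to a single fundamental lattice. Writing the index set as $P\subseteq\NN^k$, the definable set $A\subseteq P\times\NN^n$ witnessing the family is by \sref{Theorem}{fund} a disjoint union $J_1\sqcup\dots\sqcup J_N\subseteq\NN^{k+n}$ of fundamental lattices. Since (by the remark following \sref{Theorem}{Existe}) the dimension of a definable set equals the maximal dimension of a fundamental lattice contained in it, for each $p\in P'$ the section $A_p=\bigsqcup_i(J_i\upharpoonright p)$ being $n$-dimensional forces some $J_i\upharpoonright p$ to be $n$-dimensional. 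If $P'$ were infinite, the pigeonhole principle would give one index $i$ valid for infinitely many $p\in P'$; these sections $J_i\upharpoonright p$ are still pairwise disjoint (being subsets of the pairwise disjoint $A_p$) and form a definable family. So it suffices to treat $A=J$ a single fundamental lattice $J=\{g_0+\sum_{j=1}^r t_jg_j\mid t_j\in\NN\}$ with linearly independent $g_j$, and prove that the set of $p$ whose section $J\upharpoonright p$ is $n$-dimensional and which are pairwise disjoint is finite.

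Next I would make the sections explicit. Splitting each generator as $g_j=(a_j,b_j)\in\ZZ^k\times\ZZ^n$ and setting $\alpha(t)=\sum t_ja_j$ and $\beta(t)=\sum t_jb_j$, one has $J\upharpoonright p=\{b_0+\beta(t)\mid t\in T_p\}$, where $T_p=\{t\in\NN^r\mid\alpha(t)=p-a_0\}$. The decisive observation is that the recession cone of the polyhedron $T_p$, namely $\{u\in\RR^r_{\ge0}\mid\alpha(u)=0\}$, does not depend on $p$. Therefore the set of directions to infinity of $J\upharpoonright p$ is the image of this fixed cone under the fixed linear map $\beta$, and the asymptotic (natural) density $\delta_0$ of $J\upharpoonright p$ in $\NN^n$ is \emph{the same constant} for all $p$ whose section is $n$-dimensional; moreover $\delta_0>0$ exactly when $\beta$ carries the recession cone onto a full-dimensional cone in $\RR^n$, which is precisely the condition of $n$-dimensionality. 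Thus there is a single $\delta_0>0$ with $\liminf_{s\to\infty}|(J\upharpoonright p)\cap B_s|/|B_s|\ge\delta_0$ for every such $p$, where $B_s=\{b\in\NN^n\mid 0\le b_i\le s\}$.

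Finally I would conclude by counting. For distinct $p_1,\dots,p_M\in P'$, pairwise disjointness yields $\sum_{i=1}^M|(J\upharpoonright p_i)\cap B_s|\le|B_s|$ for every $s$. Dividing by $|B_s|$, using superadditivity of $\liminf$, and invoking the uniform density bound gives $M\delta_0\le\sum_{i=1}^M\liminf_s|(J\upharpoonright p_i)\cap B_s|/|B_s|\le\liminf_s\sum_{i=1}^M|(J\upharpoonright p_i)\cap B_s|/|B_s|\le 1$, so $M\le 1/\delta_0$. Hence the family has at most $\lfloor 1/\delta_0\rfloor$ members, contradicting the assumed infiniteness of $P'$.

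I expect the main obstacle to lie in the density claim of the second paragraph: rigorously proving that the asymptotic density of the pushed-forward lattice points $\beta(T_p\cap\ZZ^r)$ is a strictly positive rational constant independent of $p$, governed solely by the $p$-independent recession cone. A naive density estimate fails, since a priori the determinants controlling the sections could grow with $p$ — as indeed happens for genuinely non-definable disjoint full-dimensional families, e.g. $\{x\mid v_2(x)=p\}$ for the $2$-adic valuation $v_2$, whose members have density $2^{-(p+1)}\to 0$. The point is precisely that within one fundamental lattice the asymptotics are rigid. I would handle this by decomposing $T_p$ into a bounded part and its recession cone, noting only the cone affects the density, and reducing to the standard fact that the lattice points of a full-dimensional rational cone, pushed forward by a surjective linear map, have a fixed positive rational density.
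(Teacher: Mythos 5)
Your proposal is correct in outline, but it reaches the conclusion by a genuinely different final step than the paper. Both arguments begin identically: reduce via \sref{Theorem}{fund} and pigeonhole to a single fundamental lattice $J$ with infinitely many pairwise disjoint $n$-dimensional sections, and both rest on the same structural fact --- that the recession directions of the sections $J\upharpoonright p$ do not depend on $p$ (the paper phrases this as: every generating vector $\vv_j$ of an $n$-dimensional sublattice of one section is a non-negative combination of generators of $J$ with vanishing first $k$ coordinates, hence a translate of the \emph{same} full-rank fundamental lattice $K=\langle \vv_1,\ldots,\vv_n\rangle$ sits inside \emph{every} section). Where you diverge is in how disjointness is exploited. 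The paper stays purely combinatorial: given infinitely many pairwise disjoint translates of the fixed full-rank lattice $K$ in $\NN^n$, two of the translation vectors must agree modulo the finite-index subgroup $\ZZ\vv_1+\ldots+\ZZ\vv_n$ of $\ZZ^n$, and those two translates then intersect --- no densities, volumes, or limits are needed. You instead establish a uniform positive lower density $\delta_0$ for every $n$-dimensional section and bound the number of disjoint ones by $1/\delta_0$. Your route works and even yields an explicit quantitative bound, but it is strictly more expensive: as you yourself flag, the density of the pushed-forward lattice points $\beta(T_p)$ is the real technical burden, and your stated claim that this density is \emph{exactly} the same constant for all $p$ is stronger than what you use or need (only the $\liminf$ lower bound $\ge\delta_0$ enters the counting step, and that is the robust part, since for fixed $p$ the translation by $\beta(t_0^{(p)})$ is washed out as $s\to\infty$). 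If you want the cheaper ending, note that once you have a $p$-independent full-dimensional recession cone you already have a $p$-independent full-rank sublattice inside every section, and the finite-quotient pigeonhole finishes immediately.
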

	\begin{proof}
        Let us consider the set $A=\{p\frown a\mid p\in P\mbox{ and }a\in A_p\}$. By \sref{Theorem}{fund}, the set $A$ is a disjoint union of finitely many fundamental lattices  $J_i\subseteq \NN^{k+n}.$ It is easy to see that if some set $A_p$ is $n$-dimensional, then for some $i$ the section $J_i\upharpoonright p=\{a\mid p\frown a\in J_i\}$ is an $n$-dimensional set. Thus it is enough to show that for each $J_i$ there are only finitely many $p\in P'$ for which the section $J_i\upharpoonright p$ is an $n$-dimensional set.
	    
	    Let us now assume for a contradiction that for some $J_i$ there are infinitely many $p\in P'$ for which $J_i\upharpoonright p$ are $n$-dimensional sets. Let us consider some $p\in P'$ such that the section $J_i\upharpoonright p$ is an $n$-dimensional set. Then there exists an $n$-dimensional fundamental lattice $K\subseteq J_i\upharpoonright p$. Suppose the generating vectors of $K$ are $\vv_1,\ldots,\vv_n\in \NN^n$ and initial vector of $K$ is $\uv\in \NN^n$. It is easy to see that each vector $\vv_j$ is a non-negative linear combination of generating vectors of $J_i$, since otherwise for large enough $h\in \mathbb{N}$ we would have $\cv+h\vv_j\not\in J_i$. Now notice that for any  $p\in P$ and $\av \in J_i\upharpoonright p$ the $n$-dimensional lattice with generating vectors $\vv_1,\ldots,\vv_n$ and initial vector $\av$ is a subset of $J_i\upharpoonright p$.
	    
	    Thus infinitely many of the sets $A_p$, for $p\in P'$, contain some shifts of the same $n$-dimensional fundamental lattice $K$. It is easy to see that the latter contradicts the assumption that all the sets $A_p$, for $p\in P'$, are disjoint.
	\end{proof}
	
	\subsection{Ranks of Linear Orderings}
	
	\begin{definition}
	 A linear ordering $(L,\prec)$ is called \emph{scattered} (\cite[pp.\,32--33]{rosenstein}) if it does not have an infinite dense subordering.
	\end{definition}
	
	\begin{definition}
		Let $(L,\prec)$ be a linear ordering. We define a family of equivalence relations $\simeq_{\alpha}$, for ordinals $\alpha\in\mathbf{Ord}$ by transfinite recursion:
		
		\begin{itemize}
			\item $\simeq_0$ is just equality;
			\item $\simeq_{\lambda}=\bigcup\limits_{\beta<\lambda}\simeq_{\alpha}$, for limit ordinals $\lambda$;
			\item $a\simeq_{\alpha+1}b \stackrel{\mbox{\footnotesize $\mathrm{def}$}}{\iff} |\{c\in L\mid (a\prec c\prec b)\mbox{ or }(b\prec c \prec a)\}/{\simeq_{\alpha}}|<\aleph_0$.
		\end{itemize}
	
		Now we define  $\mathit{VD}_*$-\emph{rank}\footnote{$\mathit{VD}$ stand for {\em very discrete}; see \cite[p.\,84-89]{rosenstein}.} $\rk(L,\prec)\in \mathbf{Ord}\cup \{\infty\}$ of the ordering $(L,\prec)$. The $\mathit{VD}_*$-rank $\rk(L,\prec)$ is the least $\alpha$ such that $L/{\simeq_{\alpha}}$ is finite. If, furthermore, for all $\alpha\in \mathbf{Ord}$ the factor-set $L/{\simeq_{\alpha}}$ is infinite, we put $\rk(L,\prec)=\infty$.
		
		By definition we put $\alpha<\infty$, for all $\alpha\in \mathbf{Ord}$.
	\end{definition}
	
	The definition given above corresponds to the procedure of \emph{condensation} that glues the points at finite distance from each other. 
	The $\mathit{VD}_*$-rank is now the minimal number of iterated condensations required to reach some finite ordering.
	
	\begin{proposition}\label{3.1}
		Linear orderings $(L,\prec)$ such that $\rk(L,\prec)<\infty$ are exactly the scattered linear orderings.
	\end{proposition}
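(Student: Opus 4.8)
The plan is to prove the two inclusions separately, each by contraposition, after first recording the basic structure of the condensation chain $\langle\simeq_\alpha\mid\alpha\in\mathbf{Ord}\rangle$. By a routine transfinite induction I would verify that every $\simeq_\alpha$ is a \emph{convex} equivalence relation, meaning that its classes are intervals of $(L,\prec)$, and that the chain is increasing, $\simeq_\alpha\subseteq\simeq_{\alpha+1}$. Convexity lets us regard each quotient $L/{\simeq_\alpha}$ as a linear ordering in its own right, and under this view $a\simeq_{\alpha+1}b$ holds exactly when the images of $a$ and $b$ in $L/{\simeq_\alpha}$ are identified by the one-step condensation $\simeq_1$ computed inside $L/{\simeq_\alpha}$. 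Only the successor clause needs attention: convexity of the $\simeq_\alpha$-classes is precisely what yields transitivity of $\simeq_{\alpha+1}$ as well as the inclusion $\simeq_\alpha\subseteq\simeq_{\alpha+1}$ (the interval between two $\simeq_\alpha$-equivalent points meets a single class).

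For the direction $\rk(L,\prec)<\infty\Rightarrow$ scattered I would prove the contrapositive. Assume $(L,\prec)$ has an infinite subordering $D$ that is dense in itself, so that between any two points of $D$ there lie infinitely many points of $D$. I claim that distinct points of $D$ are never $\simeq_\alpha$-equivalent, by induction on $\alpha$. The base case is immediate since $\simeq_0$ is equality, and the limit case follows at once from $\simeq_\lambda=\bigcup_{\beta<\lambda}\simeq_\beta$. For the successor step, if $a,b\in D$ are distinct then the open interval between them contains infinitely many points of $D$, which by the induction hypothesis lie in pairwise distinct $\simeq_\alpha$-classes; hence that interval has infinitely many $\simeq_\alpha$-classes and $a\not\simeq_{\alpha+1}b$. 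Consequently the image of $D$ in $L/{\simeq_\alpha}$ is infinite for every $\alpha$, so $\rk(L,\prec)=\infty$.

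For the converse, scattered $\Rightarrow\rk(L,\prec)<\infty$, I would again argue contrapositively and show that $\rk(L,\prec)=\infty$ forces a dense subordering. Since $\langle\simeq_\alpha\rangle$ is an increasing chain of subsets of $L\times L$, it must stabilize at some ordinal $\alpha^*$ with $\simeq_{\alpha^*}=\simeq_{\alpha^*+1}$. Put $M=L/{\simeq_{\alpha^*}}$; because $\rk(L,\prec)=\infty$ the ordering $M$ is infinite, and the fixed-point equation says precisely that the finite condensation of $M$ is trivial, i.e. between any two distinct points of $M$ there are infinitely many points of $M$. From this I would extract a copy of the dyadic rationals by a tree construction: fix $a\prec b$ in $M$, and recursively, whenever two adjacent chosen points $x\prec y$ have been placed, use the infinitude of the interval $(x,y)$ in $M$ to insert a new point strictly between them. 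The resulting set is order-isomorphic to the dyadic rationals of $(0,1)$, hence an infinite dense-in-itself subordering of $M$. Finally, selecting one representative from each of the corresponding $\simeq_{\alpha^*}$-classes and invoking convexity transports this dense subordering back into $L$, so $(L,\prec)$ is not scattered.

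The structural lemmas and the easy direction are essentially bookkeeping; the genuine obstacle is the converse, and within it the two load-bearing points are, first, justifying that the condensation chain reaches a fixed point and correctly reading off what that fixed point asserts about $M$, and second, the extraction of the dense subordering, where one must check that triviality of the finite condensation keeps every relevant interval nonempty at each stage of the recursion, so the dyadic construction never gets stuck.
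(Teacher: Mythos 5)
Your proof is correct and follows essentially the same route as the paper: the forward direction by transfinite induction showing that a dense subordering survives every condensation step, and the converse by passing to the stabilized condensation, observing that between any two of its (infinitely many) classes there are infinitely many classes, and choosing representatives. Your version merely spells out details the paper leaves implicit (convexity of the classes, existence of the fixed point, the explicit dyadic extraction, which is not even needed once the full set of representatives is seen to be dense).
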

	\begin{proof}
		$(\Rightarrow)$ Let $(L,\prec)$ be not scattered. This means there is a dense subordering $(S,\prec)$ in $L$ with the induced order relation. However, after a single condensation operation, any two points of $S$ remain separate as there is an infinite number of points even from $S$ between them. This means that the condensed ordering still contains $S$ as subordering. By transfinite induction, this holds now for all ordinal-numbered iterations. Hence, $L$ cannot have a $\mathit{VD}_*$-rank $<\infty$.
		
		$(\Leftarrow)$ Let $\rk(L,\prec)=\infty$. We have to prove that there is an embedded dense ordering in $L$. Consider the equivalence relation on the points of $L$: $x\sim y\Lra\text{``$x$ and $y$ have been identified on some step of condensations"}$. As the rank does not equal to any ordinal, the number of equivalence classes is infinite. Picking a representative from each, we obtain the required dense subordering: as no two points are identified, there is always an infinite number of points between them. Indeed, were any two groups at a finite distance from each other in the induced ordering, they would have been joined into a single group at some step.
	\end{proof}

	The orderings with the $\mathit{VD}_*$-rank equal to $0$ are exactly finite orderings, and the orderings with $\mathit{VD}_*$-rank $\le 1$ are exactly the ordered sums of finitely many copies of $\NN$, $-\NN$ and $1$ (one-element linear ordering).
	
	\begin{remark}
		Each scattered linear ordering of $\mathit{VD}_*$-rank 1 is $1$-dimensionally interpretable in $(\NN,+)$. There are scattered linear orderings of $\mathit{VD}_*$-rank 2 that are not interpretable in $(\NN,+)$.
	\end{remark}
	\begin{proof}
		The interpretability of linear orderings with rank $0$ and rank $1$ follows from the description above.
		
		Since there are uncountably many non-isomorphic scattered linear orderings of $\mathit{VD}_*$-rank 2 and only countably many linear orderings interpretable in $(\NN,+)$, there is some scattered linear ordering of $\mathit{VD}_*$-rank 2 that is not interpretable in $(\NN,+)$.
	\end{proof}
	
	Now we prove the rank condition.

	\repeattheorem{ordering}
	\begin{proof}
		Since any ordering with a finite $\mathit{VD}_*$ rank is scattered, it is enough that $\rk(L,\prec)\le m$. We prove the theorem by induction on $m\ge 1$.

		Assume for a contradiction that there is an $m$-dimensionally interpretable ordering $(L,\prec)$ with $\rk(L,\prec)>m$. By the definition of $\mathit{VD}_*$-rank, there are infinitely many distinct $\simeq_m$-equivalence classes in $L$. Hence, either there is an infinite ascending $a_0\prec a_1\prec\ldots$ or descending $a_0\succ a_1\succ\ldots$ chain of elements of $L$ such that $a_i\not\simeq_m a_{i+1}$, for each $i$. Let $L_i$ be the intervals $(a_i,a_{i+1})$ in the order $\prec$, if we had an ascending chain, or the intervals $(a_{i+1},a_{i})$ in the order $\prec$, if we had a descending chain. Since $a_i\not\simeq_m a_{i+1}$, the set $L_i/{\simeq_{m-1}}$ is infinite and $\rk(L_i,\prec)>m-1$.

        Clearly, all the intervals $L_i$ are definable. Let us show that $\dim(L_i)\ge m$, for each $i$. If $m=1$ then it follows from the fact that $L_i$ is infinite. If $m>1$ then we assume for a contradiction that $\dim(L_i)<m$. Also notice that in this case $(L_i,\prec)$ would be $(m-1)$-dimensionally interpretable in $(N,+)$, which contradicts the induction hypothesis and the fact that $\rk(L_i,\prec)>m-1$. Since $L_i\subseteq \NN^m$, we conclude that $\dim(L_i)=m$, for all $i$.

		Now consider the definable family of sets $\{(a,b)\mid a,b\in L^2\}$. We see that all $L_i$ are in this family. Thus we have infinitely many disjoint sets of the dimension $m$ in the family and hence there is a contradiction with \sref{Lemma}{MS}.
		
		We have proved that if $(L,\prec)$ is $m$-dimensionally interpretable in $(\NN,+)$, then its $\mathit{VD}_*$-rank is at most $m$. Hence, by \sref{Proposition}{3.1}, $L$ is scattered.
	\end{proof}
	
	\section{Visser's Conjecture in One-Dimensional Case}
	
	Let us now consider the extension of the first-order predicate language with an additional quantifier $\exists^{=y}x,$ called a {\em counting quantifier} (notion introduced in \cite{barrington}). The syntax is as follows: if $f(\xv,z)$ is an $\Lc$-formula with the free variables $\xv,z,$ then $F=\exists^{=y}z\:G(\xv,z)$ is also a formula with the free variables $\xv,y.$
	
	We extend the standard assignment of truth values to first-order formulas in the model $(\mathbb{N},+)$ to formulas with counting quantifiers. For a formula $F(\xv,y)$ of the form $\exists^{=y}z\:G(\xv,z)$, a vector of natural numbers $\av$, and a natural number $n$ we say that $F(\av,n)$ is true if and only if there are exactly $n$ distinct natural numbers $b$ such that $G(\av,b)$ is true. H.~Apelt \cite{apelt} and N.~Schweikardt \cite{schweikardt} have established that such an extension does not change the expressive power of $\PrA:$
	
	\begin{theorem}(\cite[Corollary 5.10]{schweikardt})\label{unti}
		Every formula $F(\overline{x})$ in the language of Presburger arithmetic with counting quantifiers is equivalent in $(\NN,+)$ to a quan\-ti\-fier-free formula.
	\end{theorem}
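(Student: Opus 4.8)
The plan is to prove the statement by induction on the structure of $F$, with the real work concentrated on eliminating a single counting quantifier whose scope is an ordinary (counting-quantifier-free) formula. Given an arbitrary $F$, one processes its counting quantifiers from the innermost outward. Below any fixed counting quantifier the inductive hypothesis lets me assume the scope has already been rewritten without counting quantifiers, and the ordinary quantifiers that remain can then be eliminated by Presburger quantifier elimination in the extended language (with $\le$ and the $\equiv_n$). Thus it suffices to show that for a quantifier-free $G(\xv,z)$ the formula $\exists^{=y}z\,G(\xv,z)$, which asserts that the section $\{z\in\NN\mid G(\av,z)\}$ has exactly $y$ elements, is equivalent in $(\NN,+)$ to a Presburger formula in $\xv,y$; applying quantifier elimination one more time turns this into a quantifier-free formula.

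To handle the single counting quantifier I would view $G$ as defining the set $A=\{\av\frown b\mid G(\av,b)\}\subseteq\NN^{n+1}$ and ask for the cardinality of the section $A\upharpoonright\av$ as a function of $\av$. By \sref{Theorem}{fund}, $A$ is a disjoint union of finitely many fundamental lattices $J_1\sco J_k$, so $|A\upharpoonright\av|=\sum_i|J_i\upharpoonright\av|$ whenever all the summands are finite, and the problem reduces to counting the section of a single fundamental lattice. Fixing the first $n$ coordinates equal to $\av$ constrains the lattice parameters to a one-parameter family (the generators being linearly independent), so the last coordinate $z$ ranges over the integer points of a parametrized interval subject to fixed congruence conditions, with endpoints linear in $\av$.

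The heart of the argument is that this section count is itself a Presburger-definable function of $\av$. The number of integers in an interval $[\ell(\av),u(\av)]$ satisfying a congruence $z\equiv c\pmod d$ is $\lfloor(u(\av)-c)/d\rfloor-\lceil(\ell(\av)-c)/d\rceil+1$ on the region where the interval is nonempty, and both integer division and remainder are definable in $(\NN,+)$ (the unique $q,r$ with $z=dq+r$, $0\le r<d$, are Presburger-definable for each fixed $d$). Hence on each of finitely many definable pieces the count is a piecewise-linear function with bounded periodic corrections, definable in $(\NN,+)$, and in fact piecewise linear in the sense of \sref{Theorem}{jetz}. Summing the finitely many lattice contributions keeps the count definable; the values of $\av$ for which some contributing interval is unbounded — so that the section is infinite — form a definable set on which $\exists^{=y}z\,G$ is simply declared false. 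Assembling these pieces yields a Presburger formula equating $y$ with the section count, completing the reduction.

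I expect the main obstacle to be the third step: carefully justifying that the parametrized lattice-point count is Presburger-definable. This includes the uniform treatment of the coefficient of $z$ across the atoms of $G$ (clearing denominators by passing to a common multiple of those coefficients) and the correct bookkeeping of the boundary corrections and of the finitely many relevant congruence classes. This is in effect an elementary Ehrhart-type count for one-dimensional parametrized sections, and it is where the substance of the Apelt--Schweikardt argument lies.
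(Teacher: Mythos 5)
The paper does not actually prove this statement: Theorem \ref{unti} is imported as a black box, cited to Schweikardt (and Apelt), and only used downstream. So there is no in-paper proof to compare against; what you have written is a self-contained outline, and it is a correct one of essentially the standard argument. The reduction to a single counting quantifier over a counting-quantifier-free scope (innermost-out induction plus ordinary Presburger quantifier elimination in the extended language) is sound, and the key geometric point is right: for a fundamental lattice in $\NN^{n+1}$, the generators are linearly independent and the projection forgetting the last coordinate has a one-dimensional kernel, so fixing the first $n$ coordinates constrains the parameters to an affine set of dimension at most one; moreover, within one lattice the surviving points are in bijection with their last coordinates, and disjointness of the lattices makes the total count the sum of the per-lattice counts, with no double counting. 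The remaining content is exactly where you locate it: the integer solutions of the resulting one-parameter linear system form (when nonempty, which is itself a congruence-plus-linear condition on $\av$) an arithmetic progression restricted to an interval whose endpoints are piecewise linear in $\av$ with congruence corrections, and since division with remainder by a fixed constant is definable in $(\NN,+)$, the floor expressions for the count are Presburger-definable; the locus where some section is infinite is definable and the counting formula is declared false there. None of this is a gap, only bookkeeping, so your proposal stands as a legitimate elementary proof of a result the paper merely cites.
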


		\repeattheorem{1a}

	\begin{proof}
	    From \sref{Theorem}{4r} it follows that it is enough to consider the case when the interpretation that gives us $\mathfrak{A}$ has absolute equality.
	
		Let us denote the relation given by the $\PrA$ definition of $<$ within $\mathfrak{A}$ by $<^{\mathfrak{A}}$. Clearly, $<^{\mathfrak{A}}$ is definable in $(\NN,+)$. Hence, by \sref{Theorem}{ordering}, the order type of $\mathfrak{A}$ is scattered. But since any non-standard model of $\PrA$ is not scattered, the model $\mathfrak{A}$ is isomorphic to $(\NN,+)$. 
		
		It is easy to see that the isomorphism $f$ from $\mathfrak{A}$ to $(\NN,+)$ is the function $f\colon x\mapsto |\{y\in\mathbb{N}\mid y<^{\mathfrak{A}}x\}|$. Now we use a counting quantifier to express the function:
		
		\begin{center}
		$$f(a)=b \iff (\mathbb{N},+)\models \exists^{=b}z \;(z<^{\mathfrak{A}}a).$$
		\end{center}
		
		Now apply \sref{Theorem}{unti} and see that $f$ is definable in $(\mathbb{N},+)$.
	\end{proof}
	
	This implies Visser's Conjecture for one-dimensional interpretations.
    
	\begin{theorem}\label{visser_1}
	    Theory $\PrA$ is not one-dimensionally interpretable in any of its finitely axiomatizable subtheories.
	\end{theorem}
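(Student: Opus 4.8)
The plan is to argue by contradiction, leveraging \sref{Theorem}{1a} together with the classical fact that $\PrA$ is not finitely axiomatizable. Suppose $\iota$ were a one-dimensional interpretation of $\PrA$ in some finitely axiomatizable subtheory $\mathbf{T}\subseteq\PrA$. Since $\mathbf{T}$ consists of sentences true in $(\NN,+)$, the standard model satisfies $\mathbf{T}$, and hence $\iota$ applied to $(\NN,+)$ yields an internal model of $\PrA$. By \sref{Theorem}{1a}(b) this internal model is isomorphic to $(\NN,+)$ via an isomorphism defined by some first-order formula $F(x,y)$ in the language of $(\NN,+)$.

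First I would package the assertion ``$F$ defines an isomorphism between the internal $\iota$-model and the ambient structure $(\NN,+)$'' as a single first-order sentence $\psi$: namely that $F$ is $=_\iota$-invariant, defines a function on $\mathbf{D}_\iota$, is injective modulo $=_\iota$ and surjective onto the ambient domain, and carries the graph of $+_\iota$ to the graph of $+$. Because this holds in the standard model, $(\NN,+)\models\psi$, so $\psi\in\PrA$. The key observation is then that $\mathbf{T}\cup\{\psi\}$ already axiomatizes all of $\PrA$. Indeed, take any $\mathfrak{M}\models\mathbf{T}\cup\{\psi\}$; since $\iota$ interprets $\PrA$ in $\mathbf{T}$, the internal model $\iota(\mathfrak{M})$ satisfies $\PrA$, while $\psi$ asserts that $F$ is an isomorphism of $\iota(\mathfrak{M})$ onto $\mathfrak{M}$. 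Isomorphic structures are elementarily equivalent, so $\mathfrak{M}\models\PrA$.

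Combining this with the fact that every model of $\PrA$ satisfies $\mathbf{T}$ (as $\mathbf{T}\subseteq\PrA$) and satisfies $\psi$ (as $\psi\in\PrA$), the theories $\mathbf{T}\cup\{\psi\}$ and $\PrA$ have exactly the same models and hence coincide. But $\mathbf{T}$ is finite and $\psi$ is a single sentence, so $\mathbf{T}\cup\{\psi\}$ is finitely axiomatizable, making $\PrA$ finitely axiomatizable --- contradicting the non-finite-axiomatizability of Presburger Arithmetic. The main obstacle, and the step where \sref{Theorem}{1a}(b) is essential, is securing that the isomorphism is genuinely first-order definable so that $\psi$ can be written down as one sentence; once $\psi$ is in hand, the model-theoretic collapse is routine.
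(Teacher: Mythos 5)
Your proposal is correct and follows essentially the same route as the paper: the paper likewise forms the finitely axiomatizable theory $\mathbf{T}'=\mathbf{T}\cup\{\psi\}$, where $\psi$ states that the definable isomorphism from \sref{Theorem}{1a}(b) works, observes that $\mathbf{T}'$ is true in $(\NN,+)$ and proves every axiom of $\PrA$ by transferring truth from the internal model, and concludes with the non-finite-axiomatizability of $\PrA$. The only cosmetic difference is that you phrase the transfer step model-theoretically (isomorphic structures are elementarily equivalent) while the paper phrases it as provability in $\mathbf{T}'$.
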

	\begin{proof}
	    Assume $\iota$ is an one-dimensional interpretation of $\PrA$ in some finitely axiomatizable subtheory $\mathbf{T}$ of $\PrA$. In the standard model $(\NN,+)$ the interpretation $\iota$ gives us a model $\mathfrak{A}$ for which there is a definable isomorphism $f$ with $(\NN,+)$. Now let us consider theory $\mathbf{T}'$ that consists of $\mathbf{T}$ and the statement that the definition of $f$ establishes an isomorphism between (internal) natural numbers and the structure given by $\iota$. Clearly, $\mathbf{T}'$ is finitely axiomatizable and true in $(\NN,+)$, and hence it is a subtheory of $\PrA$. But now note that $\mathbf{T}'$ proves that if something was true in the internal structure given by $\iota$, it is true. And since $\mathbf{T}'$ proved any axiom of $\PrA$ in the internal structure given by $\iota$, the theory $\mathbf{T}'$ proves every axiom of $\PrA$. Thus $\mathbf{T}'$ coincides with $\PrA$. But it is known that $\PrA$ is not finitely axiomatizable, contradiction.
	\end{proof}
	
	\section{Visser's Conjecture in Multi-Dimensional Case}
	
	Our goal is to prove \sref{Theorem}{1b}. In order to prove that all multi-dimensional interpretations of $\PrA$ in $(\NN,+)$ are isomorphic to $(\NN,+)$, we use the same argument as in one-dimensional case: an interpretation of a non-standard model would entail an interpretation of a non-scattered order, which is impossible by \sref{Theorem}{ordering}.
	
	However, in order to show that the isomorphism is definable, we first need to develop theory of cardinality functions for the definable families of finite sets.
	
	
	\begin{definition}
	   Let $J\subseteq \ZZ^n$ be a fundamental lattice generated by vectors $\overline{p}_1,\ldots,\overline{p}_m$ from $\overline{c}$.  We say that $f\colon J\to \NN$ is \emph{polynomial} if there is a polynomial with rational coefficients $P_f(x_1,\ldots, x_m)$ such that $f(\overline{c}+\overline{p}_1x_1+\ldots+\overline{p}_mx_m)=P_f(x_1,\ldots, x_m)$, for all $x_1,\ldots,x_m\in\NN$.  
	\end{definition}
	
	We note that if $f$ is a polynomial function on $J$, then the polynomial $P_f$ is uniquely determined.
	
	\begin{definition}
		Let $A\subseteq \ZZ^n$ be a definable set. We call a function $f\colon A\ra\NN$ \emph{piecewise polynomial} if there is a decomposition of $A$ into finitely many fundamental lattices $J_1,\ldots,J_k$ such that the restriction of $f$ on each $J_i$ is a polynomial. The degree $\deg(f)$ is the maximum of the degrees of the restrictions $f\upharpoonright J_i$.
	\end{definition}
	
	We note that our definition of the degree is independent of the choice of the decomposition $J_1,\ldots,J_k$. Indeed, for a piecewise polynomial function $f\colon A\to \NN$ consider the function $h_f\colon \NN\to\NN$ that maps $x\in \NN$ to $\max\{f(\overline{a})\mid \overline{a}\in A \text{ and }|\overline{a}|_{\infty}\le x\}$. Here as usual $|(a_1,\ldots,a_n)|=\max(|a_1|,\ldots,|a_n|)$. Observe that if $f$ has degree $m$ (according to a particular decomposition) then $h_f$ has the asymptotic growth rate of $m$-th degree polynomial. Thus the degree is independent of the choice of decomposition.
	
	By the same argument as above we get the following 
	\begin{lemma} \label{degree_from_bound}Suppose  piecewise polynomial functions $f,g\colon A\to \NN$ are such that $g(\overline{x})\le f(\overline{x})$, for any $\overline{x}$. Then $\deg(g)\le\deg(f)$\end{lemma}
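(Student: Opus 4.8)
The plan is to reuse verbatim the auxiliary construction from the paragraph immediately preceding the lemma. For a piecewise polynomial function $f\colon A\to\NN$ I would consider the envelope function $h_f\colon\NN\to\NN$ defined by $h_f(x)=\max\{f(\overline{a})\mid \overline{a}\in A,\ |\overline{a}|_\infty\le x\}$, which is well defined because $A\subseteq\ZZ^n$ meets each box $\{\overline{a}\mid|\overline{a}|_\infty\le x\}$ in a finite set. The key input, already established above, is that $h_f$ has the asymptotic growth rate of a polynomial of degree exactly $\deg(f)$; the same applies to $g$, yielding an envelope $h_g$ whose growth rate is that of a polynomial of degree $\deg(g)$.

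Next I would observe that the pointwise hypothesis transfers immediately to the envelopes. Since $g(\overline{x})\le f(\overline{x})$ for every $\overline{x}\in A$, taking the maximum of each side over the common finite argument set $\{\overline{a}\in A\mid|\overline{a}|_\infty\le x\}$ gives $h_g(x)\le h_f(x)$ for all $x\in\NN$.

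Finally I would compare the two growth rates. Suppose for contradiction that $\deg(g)>\deg(f)$. Then $h_g$, growing like a polynomial of degree $\deg(g)$, must eventually strictly exceed $h_f$, which grows like a polynomial of the smaller degree $\deg(f)$; this contradicts the inequality $h_g(x)\le h_f(x)$ obtained above. Hence $\deg(g)\le\deg(f)$. The degenerate case in which $A$ is finite is trivial, since then both functions are constant on a single-point lattice and have degree $0$.

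The argument is essentially routine given the asymptotic characterization of $h_f$ already proved, so I do not expect a genuine obstacle. The only point requiring minor care is to make the comparison of growth rates precise, namely to record that a function growing like a polynomial of degree $\deg(g)$ cannot remain bounded above by one growing like a polynomial of the strictly smaller degree $\deg(f)$, which is exactly what licenses the degree inequality.
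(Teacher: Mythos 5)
Your proposal is correct and is essentially the paper's own argument: the paper proves this lemma by appealing to the same envelope function $h_f(x)=\max\{f(\overline{a})\mid \overline{a}\in A,\ |\overline{a}|_{\infty}\le x\}$ introduced in the preceding paragraph (where it is observed that $h_f$ grows like a polynomial of degree $\deg(f)$), and the pointwise bound $g\le f$ gives $h_g\le h_f$, forcing $\deg(g)\le\deg(f)$. No gaps.
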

	
	The following theorem is a slight modification of the theorem by G.R.~Blakley~\cite{blakley}.
	
	\begin{theorem}\label{bash}
	    Suppose $M$ is a $m\times n$ matrix of integer numbers. Let the function $\varphi_M\colon\mathbb{Z}^m\ra\NN\cup \{\aleph_0\}$ be defined as follows:
		
		\begin{center}
			$\varphi_M(\overline{u})\eqdef|\{\overline{a}\in\NN^n\mid M\overline{a}=\overline{u}\}|.$
		\end{center}
		Additionally suppose that the values of $\varphi_M$ are always finite. Then $\varphi_M$ is a piecewise polynomial function of the degree $\le n-\rk(M)$.
	\end{theorem}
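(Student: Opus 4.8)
My plan is to induct on $d=n-\rk(M)$, peeling off one column of $M$ at a time while keeping the rank fixed. First observe what finiteness buys us: since every value of $\varphi_M$ is finite, $\ker M$ contains no nonzero vector of $\NN^n$ (such a $\overline{k}$ would let us send $\overline{a}\mapsto\overline{a}+t\overline{k}$ and produce infinitely many solutions), so in fact $\ker M\cap\RR_{\ge 0}^n=\{\overline{0}\}$ and the solution set $\{\overline{a}\in\NN^n:M\overline{a}=\overline{u}\}$ is bounded with diameter $O(|\overline{u}|_\infty)$. For the base case $d=0$ the columns of $M$ are linearly independent, so $M$ is injective and $M\overline{a}=\overline{u}$ has at most one rational solution; hence $\varphi_M$ takes only the values $0$ and $1$. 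Its support is the semilinear set $\{M\overline{a}:\overline{a}\in\NN^n\}$, which decomposes into finitely many fundamental lattices by \sref{Theorem}{fund}, so $\varphi_M$ is piecewise polynomial of degree $0$.

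For the inductive step assume $d\ge 1$. Then the columns are linearly dependent, so after reordering the last column $\overline{c}_n$ lies in the $\QQ$-span of the others. Let $M'$ be the $m\times(n-1)$ matrix of the remaining columns; removing a dependent column keeps the rank, so $\rk(M')=\rk(M)$ and $M'$ has parameter $d-1$, and $M'$ again has finite fibres (a nonnegative kernel vector of $M'$ would extend by a zero coordinate to one of $M$). Splitting off the last coordinate $a_n=k$ gives the recurrence
\[ \varphi_M(\overline{u})=\sum_{k\ge 0}\varphi_{M'}(\overline{u}-k\,\overline{c}_n), \]
in which only finitely many terms are nonzero. By the induction hypothesis $\varphi_{M'}$ is piecewise polynomial of degree $\le d-1$, so the whole step reduces to the following summation lemma: if $h\colon\ZZ^m\to\NN$ is piecewise polynomial and $\overline{c}\in\ZZ^m$ is such that for every $\overline{u}$ only finitely many $k$ satisfy $h(\overline{u}-k\overline{c})\ne 0$, then $H(\overline{u})=\sum_{k\ge 0}h(\overline{u}-k\overline{c})$ is piecewise polynomial of degree $\le\deg(h)+1$. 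Granting this, the recurrence yields $\deg(\varphi_M)\le(d-1)+1=n-\rk(M)$ and closes the induction.

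To prove the summation lemma I would first note that $g(\overline{u},k):=h(\overline{u}-k\overline{c})$ is piecewise polynomial of degree $\le\deg(h)$ on $\ZZ^m\times\NN$, since substituting the affine integer map $(\overline{u},k)\mapsto\overline{u}-k\overline{c}$ into a polynomial preserves its degree and the preimages of the lattice pieces of $h$ are semilinear, hence decompose into finitely many fundamental lattices by \sref{Theorem}{fund}. Writing $K(\overline{u})=\max\{k:g(\overline{u},k)\ne 0\}$, which is a definable $\NN$-valued function and so piecewise linear by \sref{Theorem}{jetz}, we have $H(\overline{u})=S(\overline{u},K(\overline{u}))$ for the partial-sum function $S(\overline{u},k)=\sum_{k'=0}^{k}g(\overline{u},k')$; as $S$ is composed here with a piecewise-affine map, it suffices to show $S$ is piecewise polynomial of degree $\le\deg(h)+1$. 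I expect this discrete integration to be the main obstacle. On a single fundamental-lattice piece of the domain of $g$, where $g$ equals a polynomial $P$, the fibre $\{k':(\overline{u},k')\in L\}$ over a fixed $\overline{u}$ is the intersection of the $k$-axis direction with $L$, i.e. a finite arithmetic progression on which $P$ restricts to a univariate polynomial in $k'$; summing such a polynomial over an initial segment raises the degree by exactly one (Faulhaber's formula). The genuine work is the bookkeeping that assembles the finitely many pieces: aligning the summation direction with lattice generators, tracking the residues of $k'$ and the ranges over which each piece is active, and checking that the resulting boundaries in $(\overline{u},k)$-space are piecewise linear and so admit a common refinement into fundamental lattices. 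Everything outside this step is an affine change of variables.

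Once piecewise polynomiality is established, there is a cleaner independent route to the sharp degree bound that sidesteps tracking the degree through the summation. From the recurrence one gets $\varphi_M(\overline{u})\le(K(\overline{u})+1)\cdot\max_k\varphi_{M'}(\overline{u}-k\overline{c}_n)$, a product of a piecewise-linear function with a piecewise polynomial function of degree $\le d-1$, so $\varphi_M$ is dominated by a piecewise polynomial of degree $\le d$; \sref{Lemma}{degree_from_bound} then forces $\deg(\varphi_M)\le n-\rk(M)$. Equivalently, the geometric estimate that the fibre polytope has dimension $\le n-\rk(M)$ and diameter $O(|\overline{u}|_\infty)$ gives $\varphi_M(\overline{u})=O\!\left(|\overline{u}|_\infty^{\,n-\rk(M)}\right)$, which \sref{Lemma}{degree_from_bound} again converts into the stated bound. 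Either way the induction delivers both halves of the claim.
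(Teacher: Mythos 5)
Your proposal and the paper part ways on which half of the theorem carries the work. The paper takes the piecewise polynomiality of $\varphi_M$ as a black box from Blakley's theorem and only proves the degree bound, by exactly the geometric estimate you mention at the end: finiteness of the fibres forces $\ker M\cap(\RR^+)^n=\{\overline{0}\}$, hence by compactness every natural point of the fibre over $\overline{u}$ lies in a cube of side $O(|\overline{u}|_\infty)$; an $(n-\rk(M))$-dimensional plane meets such a cube in $O(|\overline{u}|_\infty^{\,n-\rk(M)})$ natural points, and \sref{Lemma}{degree_from_bound} converts the resulting bound $\varphi_M(\overline{u})\le P(|\overline{u}|_\infty)$ into $\deg(\varphi_M)\le n-\rk(M)$. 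Your closing paragraph is therefore precisely the paper's proof of the degree half, and it is the right way to get the sharp bound --- cleaner than trying to track degrees through the column-peeling recurrence. What you add is a self-contained proof of piecewise polynomiality by induction on $n-\rk(M)$, i.e.\ essentially a reproof of Blakley's theorem; the skeleton (base case of independent columns, the recurrence $\varphi_M(\overline{u})=\sum_{k\ge0}\varphi_{M'}(\overline{u}-k\overline{c}_n)$, a discrete-integration lemma) is the standard route for such results.

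That added half is where the soft spots are. You have openly deferred the only hard step (the summation lemma), and as stated it has a concrete snag: for a general piecewise polynomial $h$ the function $K(\overline{u})=\max\{k\mid h(\overline{u}-k\overline{c})\ne0\}$ need not be Presburger-definable, because the non-vanishing locus of a polynomial of degree $\ge 2$ on a lattice is not in general semilinear, so \sref{Theorem}{jetz} does not apply the way you invoke it. In the actual application this is repairable --- the support of $\varphi_{M'}$ is the semilinear set $M'(\NN^{n-1})$, so $K$ should be defined from the support rather than from non-vanishing of the polynomial pieces --- but the lemma needs that restated hypothesis, and the bookkeeping you postpone (decomposing the partial-sum function $S(\overline{u},k)$ into fundamental lattices and composing with the piecewise linear $K$) is genuinely the bulk of Blakley's argument rather than routine. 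If you are content to cite Blakley for piecewise polynomiality, as the paper does, your final paragraph alone already constitutes a complete and correct proof of the theorem.
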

	
	\begin{proof}In \cite{blakley} it had been proved that $\varphi_M$ is a piecewise polynomial function. Further we prove that $\deg(\varphi_M)\le n-\rk(M)$. Our goal will be to find a polynomial $P(x)$ of the degree $\le n-\rk(M)$ such that $\varphi_M(\overline{u})\le P(|u|_{\infty})$. After this we could derive that $\deg(\varphi_M)\le n-\rk(M)$ by \sref{Lemma}{degree_from_bound}.

          Note that each value $\varphi_M(\overline{u})$ is the number of  natural points (we call $\overline{a}=(a_1,\ldots,a_m)$ natural if $a_1,\ldots,a_m\in\mathbb{N}$) in the hyperplane $H_u=\{\overline{a}\in \RR\mid M\overline{a}=\overline{u}\}$. We are going to find a linear in $|\overline{u}|_{\infty}$ bound on $|\overline{a}|_{\infty}$, for natural points $\overline{a}\in H_{\overline{u}}$. 
          
          Since $\varphi_M(\overline{u})$ is always finite, there could be no non-zero $\overline{a}\in \NN^n$ such that $M\overline{a}=0$. Hence there are no non-zero $\overline{a}\in (\QQ^+)^n$ such that $M\overline{a}=0$. Furthermore, since $M$ was a matrix with integer coefficients, there are no non-zero $\overline{a}\in (\RR^+)^n$ such that $M\overline{a}=0$. Thus there exists a rational $\varepsilon>0$ such that for any $\overline{a}\in (\RR^+)^n$ with $|\overline{a}|_{\infty}=1$ we have $|M\overline{a}|_{\infty}\ge \varepsilon$. Thus for any point $\overline{a}\in H_{\overline{u}}\cap (\RR^+)^n$ we have $\overline{a}\le \frac{|\overline{u}|_\infty}{\varepsilon}$.

   Henceforth all natural points of $H_{\overline{u}}$ are contained in the hypercube $[0,\frac{|\overline{u}|_\infty}{\varepsilon}]^n$. It is easy to see that the intersection of a $k$-dimensional plane with a cube $[0,b]^n$ always contains at most $((b+1)n)^k$ natural points. Given that the planes $H_u$ are $n-\rk(M)$-dimensional, we see that $\varphi_M(\overline{u})\le ((\frac{|\overline{u}|_{\infty}}{\varepsilon}+1)n)^{n-\rk(M)}$. We put $P(x)=((\frac{x}{\varepsilon}+1)n)^{n-\rk(M)}$ and finish the proof.

	\end{proof}
	
	
	\begin{corollary}\label{card_funct}
		For any definable family of finite sets $\langle A_p\subseteq \NN^n\mid p\in P\rangle$, the function $p\mapsto |A_p|$ is piecewise polynomial of the degree $\le n$.
	\end{corollary}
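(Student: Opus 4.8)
The plan is to unfold the definition of a definable family and reduce everything to \sref{Theorem}{bash}. Write $P\subseteq \NN^d$, so that the family is witnessed by a definable set $A\subseteq P\times \NN^n\subseteq \NN^{d+n}$ with $A_p=A\upharpoonright p$. By \sref{Theorem}{fund}, I would decompose $A=J_1\sqcup\ldots\sqcup J_k$ into disjoint fundamental lattices $J_i\subseteq \NN^{d+n}$; discarding the empty ones, I may assume each $J_i\neq\es$. Since the union is disjoint, $|A_p|=\sum_{i=1}^k|J_i\upharpoonright p|$, and because a finite sum of piecewise polynomial functions of degree $\le n$ is again piecewise polynomial of degree $\le n$ (take a common refinement of the decompositions, which exists since intersections of fundamental lattices are definable and hence again finite disjoint unions of fundamental lattices by \sref{Theorem}{fund}, and degrees do not grow under restriction), it suffices to treat a single fundamental lattice $J$.

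So let $J$ be generated by linearly independent vectors $\overline{v}_1,\ldots,\overline{v}_m\in\ZZ^{d+n}$ from $\overline{c}$. I split each generator as $\overline{v}_j=(\overline{v}_j',\overline{v}_j'')$ with $\overline{v}_j'\in\ZZ^d$ and $\overline{v}_j''\in\ZZ^n$, and likewise $\overline{c}=(\overline{c}',\overline{c}'')$. Because the $\overline{v}_j$ are linearly independent, the parametrization $\overline{x}\mapsto \overline{c}+\sum_j x_j\overline{v}_j$ is injective, so $|J\upharpoonright p|$ equals the number of $\overline{x}\in\NN^m$ whose image has first $d$ coordinates equal to $p$, i.e.\ the number of solutions of $M\overline{x}=p-\overline{c}'$ with $\overline{x}\in\NN^m$, where $M$ is the $d\times m$ integer matrix with columns $\overline{v}_1',\ldots,\overline{v}_m'$. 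Thus $|J\upharpoonright p|=\varphi_M(p-\overline{c}')$. To apply \sref{Theorem}{bash} I must check that $\varphi_M$ takes only finite values: a nonzero $\overline{a}\in\NN^m$ with $M\overline{a}=0$ would, added to any realized solution, produce infinitely many points in some section $J\upharpoonright p$ with $p\in P$ (such a $p$ exists as $J\neq\es$ and $A\subseteq P\times\NN^n$), contradicting finiteness of $A_p$. Hence $\varphi_M$ is piecewise polynomial of degree $\le m-\rk(M)$, and precomposing with the affine shift $p\mapsto p-\overline{c}'$ preserves both piecewise polynomiality and the degree.

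The crux is the degree bound $m-\rk(M)\le n$. By rank--nullity $m-\rk(M)=\dim\ker M$, computed over $\QQ$. Let $V$ be the $(d+n)\times m$ matrix with columns $\overline{v}_1,\ldots,\overline{v}_m$; it has full column rank $m$, hence is injective on $\QQ^m$. For $\overline{x}\in\ker M$ the first $d$ coordinates of $V\overline{x}$ vanish, so $V(\ker M)\subseteq\{0\}^d\times\QQ^n$. Injectivity of $V$ then gives $\dim\ker M=\dim V(\ker M)\le n$, whence $m-\rk(M)\le n$. Combining this with the reduction of the first paragraph yields that $p\mapsto|A_p|$ is piecewise polynomial of degree $\le n$. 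The only genuinely new ingredient beyond \sref{Theorem}{bash} is this linear-algebra bound, together with the bookkeeping that rewrites a section count as a fibre count of $M$; everything else is the routine assembly of the pieces.
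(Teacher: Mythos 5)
Your proof is correct and follows essentially the same route as the paper: decompose $A$ into disjoint fundamental lattices, identify each section count $|J_i\upharpoonright p|$ with $\varphi_M(p-\overline{c}')$ for the matrix $M$ of the first-coordinate parts of the generators, and invoke \sref{Theorem}{bash}. The only difference is that you spell out the rank--nullity argument for $m-\rk(M)\le n$ and the verification of the finiteness hypothesis, both of which the paper leaves as ``clear''.
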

	\begin{proof}
	    Let $A=\bigcup \limits_{p\in P} \{p\frown a\mid a\in A_p\}\subseteq \NN^{m+n}$. We have a decomposition of $A$ into a disjoint union of fundamental lattices $J_1,\ldots,J_n$. A sum of piecewise polynomial functions of degree $\le n$ is piecewise polynomial of the degree $\le n$.  Hence, it is enough to show that for all $J_i$ the function $f_i\colon p\mapsto |J_i\upharpoonright p|$  is a piecewise polynomial function on $P$.
	    
	    Suppose $J_i$ is generated by vectors $v_1,\ldots,v_k$ from $c$. Let $v_1',\ldots,v_k',c'$ be the vectors consisting of first $m$ coordinates of $v_1,\ldots,v_k,c$, respectively.  Let $M$ be the $m\times k$-dimensional matrix corresponding to the function that maps $(x_1,\ldots,x_k)$ to  $v_1'x_1+\ldots v_k'x_k$. It is clear that $\rk(M)\ge k-n$. Now we see that $|J_i\upharpoonright p|=\varphi_M(p-c')$ and thus $f_i$ is piecewise polynomial of the degree $\le n$.
	\end{proof}
	
	\begin{lemma}\label{monotone}
	  Each monotone piecewise polynomial function $f\colon \NN\to\NN$ of the degree $n+1$ is of the form $Cx^{n+1}+g(x)$, where $C>0$ is rational and $g\colon \NN\to\NN$, is piecewise polynomial of the degree $n$.
	\end{lemma}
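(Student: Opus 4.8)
The plan is to peel off the top-degree monomial. Fix a decomposition $\NN = J_1 \sqcup \cdots \sqcup J_k$ witnessing that $f$ is piecewise polynomial; discarding the finitely many zero-dimensional pieces, each infinite $J_i$ is an arithmetic progression $\{c_i + t d_i \mid t \in \NN\}$ on which $f$, viewed as a function of the variable $x = c_i + t d_i$, is a rational-coefficient polynomial $p_i(x)$ of degree at most $n+1$. Let $C_i \ge 0$ be the coefficient of $x^{n+1}$ in $p_i$. The goal is to show that all the $C_i$ coincide with one positive rational $C$, and that $g(x) = f(x) - Cx^{n+1}$ is then piecewise polynomial of degree at most $n$.

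The heart of the argument, and the step I expect to be the main obstacle, is showing $C_i = C_j$ for all $i,j$ from monotonicity. (Observe first that monotone here must mean non-decreasing: a non-increasing $f\colon\NN\to\NN$ would be bounded, hence of degree $0$, contrary to degree $n+1 \ge 1$.) Fix two infinite pieces $J_i$ and $J_j$. Since $J_j$ is an arithmetic progression with common difference $d_j$, for every sufficiently large $x \in J_i$ there is an element $x' \in J_j$ with $x \le x' < x + d_j$; as the gap $x' - x$ is bounded, $x'/x \to 1$ as $x \to \infty$ along $J_i$. Monotonicity gives $p_j(x') = f(x') \ge f(x) = p_i(x)$, and dividing by $x^{n+1}$ and letting $x\to\infty$ yields $C_j \ge C_i$; by symmetry $C_i = C_j$. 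Since $f$ has degree $n+1$, at least one piece has degree exactly $n+1$ and hence strictly positive leading coefficient, so the common value $C$ is a positive rational.

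Finally, set $g = f - Cx^{n+1}$. On each infinite $J_i$ the restriction of $g$ equals $p_i(x) - Cx^{n+1}$, whose $x^{n+1}$-terms cancel, so it is a polynomial of degree at most $n$; the finitely many discarded points contribute only zero-dimensional (degree-$0$) pieces. Hence $g$ is piecewise polynomial of degree at most $n$ and $f(x) = Cx^{n+1} + g(x)$ as required; the degree bound can also be read off from the growth function via \sref{Lemma}{degree_from_bound}. Checking that this $g$ is again a legitimate piecewise-polynomial function on all of $\NN$ is a direct verification, so the entire weight of the proof rests on the monotonicity comparison of the previous paragraph, which is what forces the separate polynomial pieces to share a single leading coefficient.
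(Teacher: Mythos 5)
Your proof is correct and takes essentially the same route as the paper's: decompose $\NN$ into finitely many arithmetic progressions plus isolated points, use monotonicity to force all the leading coefficients of $x^{n+1}$ to coincide, and subtract off $Cx^{n+1}$. The paper simply asserts that the equality of leading terms is ``easy to see'' from monotonicity; your interleaving comparison of consecutive points from different progressions is precisely the omitted detail, so nothing further is needed.
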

	\begin{proof}
	    Since $f$ is piecewise polynomial, there is a splitting of $\NN$ into infinite arithmetical progressions and one-element sets $A_1,\ldots,A_n$ such that on each of them $f$ is given by a polynomial $P_1,\ldots,P_n$. From  monotonicity of $f$, it is easy to see that for all infinite $A_i$ the corresponding $P_i$ should have the same highest degree term $Cx^{n+1}$. This determines $g$. On infinite $A_i$, we see that $g(x)=P_i(x)-Cx^{n+1}$ (which is $n$-th degree polynomial). Thus, $g$ is piecewise polynomial of the degree $n$.\end{proof}
	\begin{corollary}\label{difference}
	  Suppose $f\colon \NN\to\NN$ is a monotone piecewise polynomial function of  the degree $n+1$. Then $f(x+1)-f(x)$ is piecewise polynomial of the degree $\le n$.
	\end{corollary}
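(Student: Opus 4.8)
The plan is to reduce everything to \sref{Lemma}{monotone}, which is evidently tailored for exactly this purpose. By that lemma, since $f$ is monotone, piecewise polynomial, and of degree $n+1$, I would write
\[ f(x)=Cx^{n+1}+g(x), \]
where $C>0$ is rational and $g\colon\NN\to\NN$ is piecewise polynomial of degree $n$. Then I compute the forward difference
\[ f(x+1)-f(x)=C\bigl((x+1)^{n+1}-x^{n+1}\bigr)+\bigl(g(x+1)-g(x)\bigr). \]
The first summand is a genuine polynomial whose degree is exactly $n$, since the $x^{n+1}$ terms cancel; so it contributes nothing above degree $n$.

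It remains to control $g(x+1)-g(x)$. First I would observe that the shifted function $x\mapsto g(x+1)$ is again piecewise polynomial of degree $\le n$: if $\NN$ is decomposed into finitely many arithmetic progressions and singletons on which $g$ is a polynomial (these are precisely the fundamental lattices sitting inside $\NN$), then pulling this decomposition back along the definable shift $x\mapsto x+1$ yields another such decomposition on which $g(x+1)$ is polynomial of the same degree. Taking a common refinement of the decomposition witnessing that $g$ is piecewise polynomial and the one witnessing the same for $x\mapsto g(x+1)$ — the intersection of two arithmetic progressions is again an arithmetic progression or a singleton, so the refinement is still a finite partition into such pieces — I obtain finitely many pieces on each of which both $g(x)$ and $g(x+1)$ are polynomials of degree $\le n$, hence their difference is a polynomial of degree $\le n$. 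Combining with the first summand, $f(x+1)-f(x)$ is piecewise polynomial of degree $\le n$ on this common refinement.

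One point I would make explicit is where monotonicity enters a second time: the notion of piecewise polynomial function in this paper demands $\NN$-valued functions, and $f(x+1)-f(x)\ge 0$ precisely because $f$ is monotone. Thus the difference is a legitimate $\NN$-valued piecewise polynomial function, and the degree bound just established finishes the proof. The only step I expect to require genuine care — and the one I would state carefully rather than take for granted — is the closure of the defining decompositions under the shift and under common refinement, i.e. that translating and intersecting the defining arithmetic progressions keeps us inside the piecewise polynomial framework; everything else is immediate from \sref{Lemma}{monotone}.
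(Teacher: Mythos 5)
Your proof is correct and follows exactly the route the paper intends: the corollary is stated without proof as an immediate consequence of Lemma~\ref{monotone}, and your derivation (split off $Cx^{n+1}$, note the leading terms cancel in the forward difference, and handle $g(x+1)-g(x)$ via a common refinement of the defining arithmetic progressions, with monotonicity guaranteeing $\NN$-valuedness) is the argument being left implicit. Your explicit care about closure of the decompositions under shift and intersection is a reasonable addition rather than a deviation.
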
	
	\repeattheorem{1b}
	\begin{proof}
	   As in the proof of \sref{Theorem}{1a} we may assume that the interpretation  of $\mathfrak{A}$ has absolute equality. And we show that $\mathfrak{A}\simeq(\mathbb{N},+)$ by the same method. So further we just prove that the isomorphism is definable.
	
		For $i\in \NN$, let $S_i$ be the maximal initial fragment of $\mathfrak{A}$ such that $|a|_\infty\le i$, for all $a\in S_i$. Clearly, $\langle S_i\mid i\in\NN\rangle$ is a definable family of finite sets. Let $h\colon \NN\to\NN$ be the function $x\mapsto |S_x|$. From \sref{Corollary}{card_funct}, it follows that the function $h$ is piecewise polynomial.   
		
                Clearly, the degree of $h$ is non-zero. First assume that $h$ has the degree $1$. In this case, since $h$ is monotone, from \sref{Corollary}{difference} it follows that $h(x+1)-h(x)$ is piecewise polynomial of the degree $0$ and hence bounded by some constant $C$. Thus, or any $i$ we have $|S_{i+1}\setminus S_i|\le C$. As we will see below this allows us to create a first-order definition of the required isomorphism $f\colon \mathfrak{A}\to (\NN,+)$.

   If $x\in S_0$ we define $f(x)$ by separately considering the cases  $x=a$, for all individual $a\in S_0$. Further we define $f(x)$ for $x\in \mathfrak{A}\setminus S_0$. We find the unique $z$ such that $x\in S_{z+1}\setminus S_{z}$. Let $$U_{x,z}=\{w\in S_{z+1}\setminus S_z\mid w<^{\mathfrak{A}}x\}.$$ Externally we know that $f(x)=h(z)+|U_{x,z}|$. Since $h$ is piecewise linear, by \sref{Theorem}{jetz} it is definable. We know that $0\le |U_{x,z}|<C$, which allows us to define the value $f(x)$ by separately considering the cases for all possible values of $|U_{x,z}|$. More formally this description corresponds to the following definition of the predicate $f(x)=y$:
                $$\bigwedge\limits_{a\in S_0}\big(x=a\to y=f(a)\big)\land \bigwedge\limits_{0\le s<C} \forall z\big(x\in S_{z+1}\setminus S_z\land |U_{x,z}|=s\to y=h(z)+s\big),$$
                where for each $s<C$ the property $|U_{x,z}|=s$ is defined by the formula
          $$(\exists! w_1,\ldots,w_s)\big((w_0,\ldots,w_{s-1}\in S_{z+1}\setminus S_z)\land \bigwedge\limits_{i<j< s}w_i\ne w_j\land \bigwedge\limits_{i<s}w_i<^{\mathfrak{A}}x\big).$$      
		
          Now assume that $h$ has the degree $k\ge 2$. Our goal will be to show that this is in fact impossible. For this we consider the following definable function $g\colon \NN\to\NN$:
          $$g(x)=\min\{ y\mid (\forall z\in S_x) (z+^{\mathfrak{A}}z+^{\mathfrak{A}}+1^{\mathfrak{A}}\in S_y)\}$$
          In other words $g(x)$ is the least $y$ such that the initial fragment $S_y$ is at least two times larger than $S_x$.  Thus we have $$h(g(x)-1)<2h(x)\le h(g(x)).$$

    Since both $h$ and $g$ are  monotone, by \sref{Lemma}{monotone} we have rational $C_1,C_2>0$ such that $h(x)=C_1x^k(1+o(1))$ and $g(x)=C_2x(1+o(1))$. Therefore $h(g(x)-1)=C_1C_2^kx^k(1+o(1))$ and $h(g(x))=C_1C_2^kx^k(1+o(1))$. Hence $2h(x)=C_1C_2^kx^k(1+o(1))$. At the same time $2h(x)=2C_1x^k(1+o(1))$. Thus $2=C_2^k$ and $C_2=\sqrt[k]{2}$. Contradiction with the fact that $C_2$ is rational.\end{proof}		
			In the same manner as \sref{Theorem}{visser_1} (but using \sref{Theorem}{1b} instead of \sref{Theorem}{1a}) we prove
			
	\begin{theorem}
	    Theory $\PrA$ is not interpretable in any of its finitely axiomatizable subtheories.
	\end{theorem}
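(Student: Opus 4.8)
The plan is to replay the argument of \sref{Theorem}{visser_1} almost verbatim, the only change being that we invoke \sref{Theorem}{1b} in place of \sref{Theorem}{1a} so as to allow the interpretation to be of arbitrary dimension.

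First I would assume for a contradiction that $\iota$ is an $m$-dimensional interpretation of $\PrA$ in some finitely axiomatizable subtheory $\mathbf{T}$ of $\PrA$. Evaluated in $(\NN,+)$, the interpretation $\iota$ gives an internal model $\mathfrak{A}\models\PrA$, and by \sref{Theorem}{1b} there is a formula $F$ of the language of $(\NN,+)$ defining an isomorphism between $\mathfrak{A}$ and $(\NN,+)$. This is the only spot where the passage from one to many dimensions is felt, and it is harmless: the isomorphism furnished by \sref{Theorem}{1b}(b) is an ordinary first-order formula of $(\NN,+)$, no matter that the interpreted objects are now $m$-tuples.

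Next I would set $\mathbf{T}'$ to consist of $\mathbf{T}$ together with the single sentence expressing that $F$ defines an isomorphism between the internal structure given by $\iota$ and the ambient model $(\NN,+)$. Since this adjoins just one sentence, $\mathbf{T}'$ is still finitely axiomatizable; and since $\mathbf{T}'$ is true in $(\NN,+)$, it is a subtheory of $\PrA=\Th(\NN,+)$. As in \sref{Theorem}{visser_1}, the presence of $F$ lets $\mathbf{T}'$ prove that whatever holds in the internal structure given by $\iota$ also holds outright. Since $\iota$ interprets $\PrA$, the theory $\mathbf{T}'$ proves every axiom of $\PrA$ in that internal structure, and hence proves every axiom of $\PrA$. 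Therefore $\mathbf{T}'=\PrA$, so $\PrA$ would be finitely axiomatizable, contradicting the fact that it is not.

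The only genuinely delicate point will be the transfer step: one must check that the single sentence declaring $F$ an isomorphism indeed enables $\mathbf{T}'$ to translate each interpreted $\PrA$-axiom uniformly into a true statement about $(\NN,+)$. This is precisely the classical ``reflexive-style'' argument already carried out in the one-dimensional \sref{Theorem}{visser_1}; multi-dimensionality affects only the arity of the domain tuples handled by $F$ and leaves the reasoning untouched.
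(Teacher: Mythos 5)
Your proposal matches the paper's proof exactly: the paper proves this theorem ``in the same manner as \sref{Theorem}{visser_1} but using \sref{Theorem}{1b} instead of \sref{Theorem}{1a}'', which is precisely the substitution you carry out. The argument is correct as written.
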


	\section*{Acknowledgments}
	
	The authors thank Lev~Beklemishev for suggesting to study Visser's conjecture, a number of fruitful discussions of the subject, and his useful comments.
	
	Work of Fedor Pakhomov is supported by grant 19-05497S of GA \v{C}R.
	
	Work of Alexander Zapryagaev was prepared within the framework of the Academic Fund Program at the National Research University Higher School of Economics (HSE) in 2020 (grant No. 19-04-050) and by the Russian Academic Excellence Project ``5-100".

\end{document}